\font\tencyr=wncyr10 \def\cyr{\tencyr\cyracc}
\numberwithin{equation}{section}
\newtheorem{thm}[equation]{Theorem}
\newtheorem{prop}[equation]{Proposition}
\newtheorem{lemma}[equation]{Lemma}
\newtheorem{cor}[equation]{Corollary}
\theoremstyle{definition}
\newtheorem{rem}[equation]{Remark}
\newtheorem{dfn}[equation]{Definition}
\newcommand{\Steen}{\operatorname{Sq}}
\newcommand{\rdim}{\operatorname{rdim}}
\newcommand{\SB}{X}
\newcommand{\Br}{\mathop{\mathrm{Br}}}
\newcommand{\ind}{\mathop{\mathrm{ind}}}
\newcommand{\coind}{\mathop{\mathrm{coind}}}
\newcommand{\CH}{\mathop{\mathrm{CH}}\nolimits}
\newcommand{\BCH}{\mathop{\overline{\mathrm{CH}}}\nolimits}
\newcommand{\Ch}{\mathop{\mathrm{Ch}}\nolimits}
\newcommand{\res}{\mathop{\mathrm{res}}\nolimits}
\newcommand{\pr}{\operatorname{\mathit{pr}}}
\newcommand{\mult}{\operatorname{mult}}
\newcommand{\id}{\mathrm{id}}
\newcommand{\Z}{\mathbb{Z}}
\newcommand{\F}{\mathbb{F}}
\newcommand{\Spec}{\operatorname{Spec}}
\newcommand{\End}{\operatorname{End}}
\newcommand{\Hom}{\operatorname{Hom}}
\newcommand{\Oplus}{\operatornamewithlimits{\textstyle\bigoplus}}
\renewcommand{\>}{\right>}
\newcommand{\compose}{\circ}
\newcommand{\CM}{\operatorname{CM}}
\newcommand{\BCM}{\operatorname{\overline{CM}}}
\renewcommand{\phi}{\varphi}
\newcommand{\RatM}{\dashrightarrow}
\title
{Hyperbolicity of orthogonal involutions}
\keywords
{Algebraic groups,
projective homogeneous varieties,
Chow groups.\\
{\em 2000 Mathematical Subject Classifications:}
14L17; 14C25}
\author
{Nikita A. Karpenko}
\address
{UPMC Univ Paris 06\\
Institut de Math\'ematiques de Jussieu\\
F-75252 Paris\\
FRANCE}
\address
{{\it Web page:}
{\tt www.math.jussieu.fr/\~{ }karpenko}}
\email {karpenko {\it at} math.jussieu.fr}
\date{18 April  2009}
\thanks
{Supported by the Collaborative Research Centre 701 of the Bielefeld University}
\begin{document}

\begin{abstract}
We show that a non-hyperbolic
orthogonal involution on a central simple algebra
over a field of characteristic $\ne2$ remains
non-hyperbolic over
some splitting field of the algebra.
\end{abstract}

\maketitle


\section
{Introduction}

Throughout this note (besides of
\S\ref{Krull-Schmidt principle} and \S\ref{Splitting off a motivic summand})
$F$ is a field of characteristic $\ne2$.
The basic reference for the material related to involutions on central simple algebras
is \cite{MR1632779}.
The {\em degree} $\deg A$ of a (finite-dimensional) central simple $F$-algebra
$A$ is the integer $\sqrt{\dim_FA}$;
the {\em index} $\ind A$ of $A$ is the degree of a central division algebra Brauer-equivalent to $A$.

The main result of this paper is as follows (the proof is given in \S\ref{Proof of Theorem}):

\begin{thm}[Main theorem]
\label{main}
A non-hyperbolic orthogonal involution $\sigma$ on a central simple $F$-algebra $A$
remains non-hyperbolic over the function field of the Severi-Brauer variety
of $A$.
\end{thm}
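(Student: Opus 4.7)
The plan is to argue by contradiction: assume $\sigma$ is non-hyperbolic over $F$ but $\sigma_{F(X)}$ is hyperbolic, where $X$ is the Severi-Brauer variety of $A$, and derive a contradiction via motivic methods. Since $A_{F(X)}$ is split, $\sigma_{F(X)}$ is adjoint to a quadratic form on $F(X)^{\deg A}$ which is then hyperbolic; equivalently, the $F$-variety $Y$ of totally isotropic right ideals of $(A,\sigma)$ of reduced dimension $\deg A/2$ acquires an $F(X)$-point while $Y(F)=\emptyset$. The goal is therefore to show that an $F(X)$-point of $Y$ forces an $F$-point of $Y$.

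I would begin by reducing to the case where $\ind A$ is a power of $2$ (and, if convenient, $A$ is a division algebra). Hyperbolicity of an orthogonal involution is a $2$-primary phenomenon, so a standard odd-degree transfer argument should effect the index reduction; a further reduction to the division case uses Morita equivalence to replace $\sigma$ by a hermitian form on the underlying division algebra, preserving hyperbolicity and the Severi-Brauer function field.

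The heart of the proof is motivic, with $\mathbb{F}_2$ coefficients. The $F(X)$-point of $Y$, viewed as a closed point of the generic fiber of the projection $X\times Y\to X$, spreads out to a closed subvariety of $X\times Y$ and hence yields an $F$-rational cycle, that is, a correspondence $X\corr Y$. Invoking the Krull--Schmidt principle for Chow motives (\S\ref{Krull-Schmidt principle}), this correspondence should exhibit a Tate-twisted copy of the upper motive of $X$ as a direct summand of the motive of $Y$. Then the technique of splitting off such a motivic summand (\S\ref{Splitting off a motivic summand}) produces an $F$-rational cycle on $Y$ whose existence is equivalent to $\sigma$ being hyperbolic over $F$, contradicting $Y(F)=\emptyset$.

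The main obstacle I anticipate is the motivic extraction step: identifying precisely which summand of the mod-$2$ motive of $Y$ is forced by the $F(X)$-point, and verifying that splitting it off really certifies hyperbolicity of $\sigma$ --- not merely isotropy, and not only hyperbolicity after some odd-degree extension. This requires a detailed analysis of the $\mathbb{F}_2$-Chow ring of the orthogonal Grassmannian of a non-split algebra of $2$-power index, which is where the preparatory machinery of the paper's earlier sections should play its decisive role. A secondary concern is confirming that the preliminary reductions to $2$-power index and to a division algebra preserve enough of the geometric structure of $(A,\sigma)$ and of $X$ for the motivic argument to survive intact.
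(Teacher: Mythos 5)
Your framing matches the paper's starting point (hyperbolicity over $F(X)$ means $Y(F(X))\neq\emptyset$ for the variety $Y$ of maximal totally isotropic ideals, Morita reduction to a hermitian form $h$ over the division algebra $D$, $\F_2$-motives and Krull--Schmidt), but the step that is supposed to produce the contradiction is not a real mechanism, and it is exactly the step you yourself flag as the obstacle. You propose that splitting off a motivic summand yields ``an $F$-rational cycle on $Y$ whose existence is equivalent to $\sigma$ being hyperbolic over $F$.'' No such cycle-theoretic criterion exists: rational cycles and motivic summands do not certify rational points. In the situation at hand $Y$ is anisotropic (every closed point has even degree, since by Bayer--Fluckiger--Lenstra $h$ stays anisotropic over odd-degree extensions), yet $Y$ carries plenty of $F$-rational correspondences of multiplicity $1$ and nontrivial motivic summands --- these are precisely what the paper's argument manipulates, and their existence is compatible with $Y(F)=\emptyset$. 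So the plan ``$F(X)$-point of $Y$ $\Rightarrow$ rational cycle $\Rightarrow$ $F$-point of $Y$'' breaks at the second arrow, and nothing in your outline replaces it.

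What the paper actually does after the common set-up is quite different and essentially all of it is missing from your sketch. The proof is an induction on $\ind A$ and $\coind A$; the induction hypothesis is used twice, once to make $h$ anisotropic and once to verify condition (1) of Proposition \ref{prop} (rationality of $Y$ over $F(X_l)$ for the generalized Severi--Brauer varieties $X_l=X(2^l;D)$, $l<r$), and one must also select the right connected component of $Y$ using the Clifford algebra and the index reduction formula. Proposition \ref{prop} is then used not to embed the upper motive of $X$ into $M(Y)$, but to strip off shifted copies of the upper motives $M_0,\dots,M_{r-1}$ of Theorem \ref{cd(gSB)}, leaving a summand $R_2$ that splits into Tate motives over $F(Y)$ (Lemma \ref{R2}). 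The contradiction is then obtained on an auxiliary variety: one takes a minimal submodule $V\subset D^{2n}$ becoming isotropic over an odd-degree extension of $F(Y)$, forms the anisotropic variety $Y'$ of totally isotropic submodules of $V$ of reduced dimension $2^r$, and shows via Lemmas \ref{tuda-suda}, \ref{Y'} and Corollary \ref{krull-schmidt} that $Y'$ carries a Rost projector (Lemma \ref{there is}). The final blow is numerical, not point-theoretic: by Rost's Theorem \ref{markus1} (Steenrod operations) an anisotropic variety with a Rost correspondence has $\deg c_{\dim Y'}(-T_{Y'})$ not divisible by $4$, whereas a generic-form computation (or Lemma \ref{markus2}) shows this degree is divisible by $2^{2^r}$. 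Without the induction, the component selection, the odd-degree stability, the auxiliary $Y'$, the Rost projector, and the Chern-class degree count, the motivic summand you extract gives no contradiction, so the proposal as it stands has a genuine gap at its core.
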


To explain the statement of Abstract, let us
note that the function field $L$ of the Severi-Brauer variety of a central simple algebra $A$ is a {\em splitting field} of $A$,
that is, the $L$-algebra $A_L$ is Brauer-trivial.

A stronger version of Theorem \ref{main}, where the
word ``non-hyperbolic'' (in each of two appearances) is replaced by ``anisotropic'', is, in general, an open conjecture, cf.
\cite[\Small Conjecture 5.2]{MR1751923}.

Let us recall that the index of a central simple algebra possessing an orthogonal involution is a power of $2$.
Here is the complete list of indices $\ind A$ and coindices
$\coind A=\deg A/\ind A$ of $A$
for which Theorem \ref{main} is known (over arbitrary fields
of characteristic $\ne2$), given in the chronological order:
\begin{itemize}
\item
$\ind A=1$ --- trivial;
\item
$\coind A=1$ (the stronger version) --- \cite[\Small Theorem 5.3]{MR1751923};
\item
$\ind A=2$ (the stronger version) --- \cite[\Small Corollary 3.4]{MR1850658};
\item
$\coind A$ odd ---  \cite[\Small appendix by Zainoulline]{kirill} and independently
\cite[\Small Theorem 3.3]{isotropy};
\item
$\ind A=4$ and $\coind A=2$ --- \cite[\Small Proposition 3]{MR2128421};
\item
$\ind A=4$ --- \cite[\Small Theorem 1.2]{hyper}.
\end{itemize}

Let us note that
Theorem \ref{main} for any given $(A,\sigma)$ with $\coind A=2$
implies the stronger version version  of Theorem \ref{main} for this
$(A,\sigma)$:
indeed, by \cite[\Small Theorem 3.3]{isotropy},
if $\coind A=2$ and $\sigma$ becomes isotropic over the function field of the
Severi-Brauer variety, then $\sigma$ becomes hyperbolic over this function
field and the weaker version applies.
Therefore we get

\begin{thm}
An anisotropic orthogonal
involution on a central simple $F$-algebra of coindex $2$
remains anisotropic over the function field of the Severi-Brauer variety of the algebra.
\qed
\end{thm}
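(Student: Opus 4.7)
The plan is to deduce the theorem from Theorem \ref{main} (the substantive result of the paper) together with the coindex-$2$ isotropy-to-hyperbolicity transfer \cite[Theorem 3.3]{isotropy}, precisely as the paragraph preceding the statement already outlines. The assembly is purely formal; there is essentially no new content beyond chaining the two inputs.

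I would argue by contraposition. Let $L$ denote the function field of the Severi-Brauer variety of $A$. Assume that the extended involution $\sigma_L$ is isotropic, and aim to conclude that $\sigma$ was already hyperbolic over $F$ (in particular, not anisotropic). The first ingredient is the cited \cite[Theorem 3.3]{isotropy}: since $\coind A = 2$ and $L$ is a splitting field of $A$ (so $A_L$ is Brauer-trivial), isotropy of $\sigma_L$ in this coindex-$2$ situation upgrades automatically to hyperbolicity of $\sigma_L$. The second ingredient is Theorem \ref{main} applied in its contrapositive form: hyperbolicity of $\sigma_L$ forces hyperbolicity of $\sigma$ over $F$. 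Concatenating the two implications, we get that isotropy of $\sigma$ over $L$ implies hyperbolicity of $\sigma$ over $F$, which in particular contradicts the standing anisotropy hypothesis. Hence $\sigma_L$ is anisotropic, as required.

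There is no real ``hard step'' in this deduction: the genuine difficulty is packaged into the two external inputs, with Theorem \ref{main} being the novel one and \cite[Theorem 3.3]{isotropy} being the known coindex-$2$ equivalence between isotropy and hyperbolicity over a splitting field. Once both are available, no further Chow-theoretic or motivic computation is needed, and the theorem follows by a one-line logical composition. The only thing worth being careful about is to verify that $L$ is indeed a splitting field of $A$ (so that \cite[Theorem 3.3]{isotropy} is applicable), which is recalled explicitly in the Introduction right after the statement of Theorem \ref{main}.
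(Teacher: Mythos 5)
Your argument is exactly the paper's: the theorem is deduced in the paragraph preceding its statement by combining \cite[Theorem 3.3]{isotropy} (isotropy over the function field implies hyperbolicity there when $\coind A=2$) with the contrapositive of Theorem \ref{main}, just as you do. The proposal is correct and takes essentially the same route, down to the same two external inputs.
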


Sivatski's proof of the case with $\deg A=8$ and $\ind A =4$, mentioned above,
is based on the following theorem, due to Laghribi:

\begin{thm}[{\cite[\Small Th{\'e}or{\`e}me 4]{MR1417621}}]
\label{laghribi}
Let $\phi$ be an anisotropic quadratic form of dimension $8$ and of
trivial discriminant.
Assume that the index of the Clifford algebra $C$ of $\phi$ is $4$.
Then $\phi$ remains anisotropic over the function field $F(X_1)$
of the Severi-Brauer variety
$X_1$ of $C$.
\end{thm}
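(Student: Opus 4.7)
I would argue by contradiction, supposing $\phi_{F(X_1)}$ is isotropic.

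\emph{Step 1: Isotropy implies hyperbolicity.} Since $\dim\phi=8$ and $\disc\phi$ is trivial, $\phi\in I^2F$. The Severi-Brauer variety $X_1$ splits $C$ by construction, so the Clifford invariant of $\phi_{F(X_1)}$ vanishes and $\phi_{F(X_1)}\in I^3F(X_1)$. By the Arason--Pfister Hauptsatz, the anisotropic kernel of any element of $I^3$ has dimension $0$ or at least $8$; together with isotropy this forces $\phi_{F(X_1)}$ to be hyperbolic. Hence it is enough to show that $\phi$ is not hyperbolic over $F(X_1)$.

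\emph{Step 2: Ruling out hyperbolicity.} Since $\ind C=4$, the division algebra $D$ Brauer-equivalent to $C$ is a biquaternion $D\simeq Q_1\otimes Q_2$, and $X_1$ is a twist of $\PP^3$. Assume for contradiction that $\phi$ is hyperbolic over $F(X_1)$. The plan is to exploit the Albert form $\psi_D$ of $D$: a $6$-dimensional anisotropic form in $I^2F$ with Clifford class $[D]=[C]$. Since $D$ splits over $F(X_1)$, $\psi_D$ also becomes hyperbolic there, so both $\phi$ and $\psi_D$ lie in the Witt kernel $W(F(X_1)/F)$. Using the Brauer-class description of $I^2/I^3$ together with a residue analysis along the generic point of $X_1$, I would try to show that $\phi$, modulo $W(F(X_1)/F)$, is similar to a Pfister multiple built from $\psi_D$. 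Such a divisibility would force $\phi$ to be already isotropic over $F$, contradicting the hypothesis.

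\emph{Main obstacle.} The decisive difficulty is controlling $W(F(X_1)/F)$ precisely. Unlike the quaternion case, where the kernel is generated by a single $2$-fold Pfister form, for the Severi-Brauer variety of a biquaternion algebra no clean generating set of Pfister forms is known. The strategy above tries to bypass this by rerouting through $\psi_D$ and the generic splitting tower of $D$, but the specialization needed to descend the structural conclusion from the tower back to $F$ is delicate. It is here that the hypothesis $\ind C=4$ (rather than $\ind C\leq 2$) is decisive: it ensures that $[D]$ remains nontrivial and $\psi_D$ remains anisotropic at each intermediate stage of the tower, so that the comparison of $\phi$ with $\psi_D$ retains its force all the way down to $F$.
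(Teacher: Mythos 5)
Your Step 1 is fine (an isotropic $8$-dimensional form in $I^3F(X_1)$ is hyperbolic by Arason--Pfister), but that reduction is not where the difficulty of Laghribi's theorem lies, and your Step 2 is a program rather than a proof. You yourself identify the missing ingredient: no usable description of the Witt kernel $W(F(X_1)/F)$ is available when $X_1$ is the Severi--Brauer variety of a biquaternion algebra, and nothing in your outline replaces it. The central assertion --- that hyperbolicity of $\phi_{F(X_1)}$ would exhibit $\phi$, modulo $W(F(X_1)/F)$, as ``similar to a Pfister multiple built from $\psi_D$'' --- is neither precisely formulated (the Albert form is $6$-dimensional, and no specific Pfister-multiple shape of dimension $8$ is proposed) nor established: the ``residue analysis along the generic point of $X_1$'' and the ``specialization down the generic splitting tower'' are named but not carried out, and even granting such a structure result, the final claim that it forces $\phi$ to be isotropic over $F$ is not argued. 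So the actual content of the theorem is left unproved.

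For comparison, the proof reproduced in the paper (due to Vishik) bypasses any computation of $W(F(X_1)/F)$ and works motivically: assuming $\phi_{F(X_1)}$ isotropic, one checks that for every field extension $E/F$ the Witt index of $\phi_E$ is at least $2$ if and only if the Albert quadric $X_2$ of the biquaternion algebra has an $E$-point; Vishik's criterion \cite[Theorem 4.15]{MR2066515}, combined with the indecomposability of $M(X_2)$, then shows that $M(X_2)(1)$ is a direct summand of the motive of the projective quadric $Y$ of $\phi$, and the complementary summand is given by a Rost projector on $Y$. Since $\dim Y+1=7$ is not a power of $2$, an anisotropic quadric cannot carry a Rost projector, and this contradiction proves the theorem. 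If you wish to stay within quadratic form theory, you would need an actual theorem controlling the Witt kernel of $F(X_1)/F$ (or Laghribi's original argument), not the heuristic rerouting through $\psi_D$ sketched in your Step 2.
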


The following alternate proof of Theorem \ref{laghribi}, given by
Vishik, is a prototype of our proof of Main theorem (Theorem \ref{main}).
Let $Y$ be the projective quadric of $\phi$ and let $X_2$ be the
Albert quadric of a biquaternion division algebra Brauer-equivalent to $C$.
Assume that $\phi_{F(X_1)}$ is isotropic.
Then
for any field extension $E/F$,
the Witt index of $\phi_E$ is at least $2$ if and only if
$X_2(E)\ne\emptyset$.
By \cite[\Small Theorem 4.15]{MR2066515} and since the Chow motive $M(X_2)$ of $X_2$ is
indecomposable,
it follows that the motive $M(X_2)(1)$ is a summand of
the motive of $Y$.
The complement summand of $M(Y)$ is then
given by a {\em Rost projector} on $Y$ in the sense of Definition \ref{def rost corr}.
Since $\dim Y+1$ is not a power of $2$,
it follows that $Y$ is isotropic (cf. \cite[\Small Corollary 80.11]{EKM}).

After introducing some notation in \S\ref{Notation} and discussing some important general principles
concerning Chow motives in \S\ref{Krull-Schmidt principle},
we produce in \S\ref{Splitting off a motivic summand}  a replacement
of \cite[\Small Theorem 4.15]{MR2066515} (used right above
to split off the summand $M(X_2)(1)$ from the motive of $Y$) valid for more general (as projective quadrics) algebraic varieties.
In \S\ref{Rost correspondences} we reproduce some recent results due to Rost concerning the modulo $2$ Rost correspondences and Rost projectors on more general (as projective quadrics) varieties.
In \S\ref{Motivic decompositions of some isotropic varieties}
we apply some standard motivic decompositions of projective
homogeneous varieties to certain varieties related to a central simple algebra
with an isotropic orthogonal involution.
We also reproduce (see Theorem \ref{cd(gSB)}) some results of \cite{outmot}
which contain the needed generalization of indecomposability of the motive of an Albert quadric used in the previous paragraph.
Finally, in \S\ref{Proof of Theorem} we prove Main theorem (Theorem \ref{main}) following the strategy of \cite{hyper}
and using results of \cite{outmot} which were not available at the time of \cite{hyper}.

\bigskip
\noindent
{\sc Acknowledgements.}
Thanks to Anne Qu{\'e}guiner for asking me the question and to Alexander Vishik for telling me the alternate proof of Theorem
\ref{laghribi}.

\section
{Notation}
\label
{Notation}

We understand under a {\em variety} a separated scheme of finite type over
a field.

Let $D$ be a central simple $F$-algebra.
The $F$-dimension of any right ideal in $D$ is divisible by $\deg D$;
the quotient is the {\em reduced dimension} of the ideal.
For any integer $i$, we write $\SB(i;D)$
for the generalized Severi-Brauer variety of the right ideals in $D$ of reduced
dimension $i$.
In particular, $\SB(0;D)=\Spec F=\SB(\deg D;D)$ and
$\SB(i,D)=\emptyset$ for $i<0$ and for $i>\deg D$.

More generally, let $V$ be a right $D$-module.
The $F$-dimension of $V$ is then divisible by $\deg D$ and the quotient
$\rdim V=\dim_F V/\deg D$ is
called the {\em reduced dimension} of $V$.
For any integer $i$, we write $\SB(i;V)$
for the projective homogeneous variety of the $D$-submodules in $V$ of reduced
dimension $i$ (non-empty iff $0\leq i\leq\rdim V$).
For a finite sequence of integers $i_1,\dots,i_r$, we write
$\SB(i_1\subset\dots\subset i_r;V)$
for the projective homogeneous variety of flags of the $D$-submodules in $V$ of reduced
dimensions $i_1,\dots,i_r$ (non-empty iff $0\leq i_1\leq\dots\leq i_r\leq\rdim V$).

Now we additionally assume that $D$ is endowed with an orthogonal involution
$\tau$.
Then we write $\SB(i;(D,\tau))$
for the variety of the totally isotropic right ideals in $D$ of reduced
dimension $i$ (non-empty iff $0\leq i\leq \deg D/2$).

If moreover $V$ is endowed with a hermitian (with respect to $\tau$) form $h$,
we write $\SB(i;(V,h))$
for the variety of the totally isotropic $D$-submodules in $V$ of reduced
dimension $i$.

We refer to \cite{MR1758562} for a detailed construction and basic properties of the above varieties.


\section
{Krull-Schmidt principle}
\label
{Krull-Schmidt principle}

The characteristic of the base field $F$ is arbitrary in this section.

Our basic reference for Chow groups and Chow motives (including notation)
is \cite{EKM}.
We fix an
associative unital commutative ring $\Lambda$
(we shall take $\Lambda=\F_2$ in the application)
and for a variety $X$ we write $\CH(X;\Lambda)$ for its Chow group with coefficients in
$\Lambda$.
Our category of motives is
the category $\CM(F,\Lambda)$ of {\em graded Chow motives with
coefficients in $\Lambda$,} \cite[\Small definition of \S64]{EKM}.
By a {\em sum} of motives we always mean the {\em direct} sum.

We shall often assume that our coefficient ring $\Lambda$ is finite.
This simplifies significantly the situation (and is sufficient for our application).
For instance, for a finite $\Lambda$, the endomorphism rings of finite sums
of Tate motives are also finite and the following easy statement applies:

\begin{lemma}
\label{finite}
An appropriate power of any element of any {\em finite} associative (not necessarily
commutative) ring is idempotent.
\end{lemma}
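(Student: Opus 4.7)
The plan is to exploit finiteness via the pigeonhole principle applied to the multiplicative powers of a fixed element. Let $x \in R$ be given. Since $R$ is finite, the infinite sequence $x, x^2, x^3, \dots$ cannot consist of distinct elements, so there exist positive integers $i < j$ with $x^i = x^j$. Setting $d \Dfn j - i > 0$, an easy induction gives the periodicity relation $x^k = x^{k+d}$ for every $k \geq i$, and more generally $x^k = x^{k + md}$ for every $k \geq i$ and every $m \geq 0$.

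Next I would choose $n$ to be any positive multiple of $d$ which is also $\geq i$ (for instance $n = i \cdot d$ works). Writing $n = md$ with $m \geq 1$ and $n \geq i$, I compute
\[
x^{2n} \;=\; x^{n + md} \;=\; x^n,
\]
where the second equality uses the periodicity relation since $n \geq i$. This means exactly that $x^n \cdot x^n = x^n$, so $x^n$ is an idempotent of $R$, which is what was to be shown.

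There is no serious obstacle: the statement is a one-paragraph pigeonhole argument about the multiplicative semigroup generated by $x$, and the non-commutativity of $R$ plays no role since only powers of the single element $x$ (which commute with each other) are involved. The only thing to be slightly careful about is to choose $n$ to lie in the eventually periodic part of the sequence (i.e. $n \geq i$) while simultaneously being a multiple of the period $d$; both conditions are easily arranged.
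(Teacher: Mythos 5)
Your argument is correct and is essentially the paper's own proof: the paper takes $x^a=x^{a+b}$ by finiteness and observes that $x^{ab}$ is idempotent, which is exactly your choice of an exponent that is at least the preperiod and a multiple of the period. You have merely written out the verification that the paper leaves implicit.
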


\begin{proof}
Since the ring is finite, any its element $x$ satisfies $x^a=x^{a+b}$
for some $a\geq1$ and $b\geq1$.
It follows that $x^{ab}$ is an idempotent.
\end{proof}

Let $X$ be a smooth complete variety over $F$.
We call $X$ {\em split}, if its {\em integral} motive $M(X)\in\CM(F,\Z)$ (and therefore its motive with any coefficients)
is a finite sum of Tate motives.
We call $X$ {\em geometrically split}, if it splits over
a field extension of $F$.
We say that $X$ satisfies the {\em nilpotence principle}, if for any field extension
$E/F$ and any coefficient ring $\Lambda$,
 the kernel of the change of field homomorphism $\End(M(X))\to\End(M(X)_E)$ consists
of nilpotents.
Any projective homogeneous variety is geometrically split and satisfies the nilpotence
principle, \cite[\Small Theorem 8.2]{MR2110630}.

\begin{cor}[{\cite[\Small Corollary 2.2]{outmot}}]
\label{cor-finite}
Assume that the coefficient ring $\Lambda$ is finite.
Let $X$ be a geometrically split variety satisfying the nilpotence principle.
Then an appropriate power of any endomorphism of the motive of $X$
is a projector.
\end{cor}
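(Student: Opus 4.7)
The plan is to bootstrap Lemma~\ref{finite} twice, passing through a splitting field in order to reduce from the possibly infinite ring $\End(M(X))$ to something finite.

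First, fix a field extension $E/F$ over which $X$ splits. Then $M(X)_E \in \CM(E,\Lambda)$ is a finite sum of Tate motives, so $\End(M(X)_E)$ is a finite direct product of matrix rings over $\Lambda$. Since $\Lambda$ is finite by hypothesis, this endomorphism ring is finite. Applying Lemma~\ref{finite} to the image $f_E \in \End(M(X)_E)$ of a given endomorphism $f \in \End(M(X))$, we obtain an integer $n \geq 1$ such that $f_E^n$ is an idempotent.

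Next, the element $f^{2n} - f^n = f^n(f^n - 1)$ lies in the kernel of the restriction map $\End(M(X)) \to \End(M(X)_E)$. By the nilpotence principle this kernel consists of nilpotents, so there exists $N \geq 1$ with
$$
0 \;=\; (f^{2n} - f^n)^N \;=\; f^{nN}(f^n - 1)^N.
$$
This exhibits $f$ as a root of a monic polynomial of degree $2nN$ with coefficients in $\Lambda$. Hence the subring $\Lambda[f] \subseteq \End(M(X))$ is generated as a $\Lambda$-module by $1, f, \dots, f^{2nN - 1}$ and, $\Lambda$ being finite, is itself a finite ring. A second application of Lemma~\ref{finite}, this time inside $\Lambda[f]$, produces an integer $m \geq 1$ such that $f^m$ is idempotent, which is what the corollary asserts.

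No individual step is deep; the one point worth noting is that $\End(M(X))$ is typically infinite, so Lemma~\ref{finite} cannot be invoked on $f$ directly. The role of the splitting field together with the nilpotence principle is precisely to manufacture a single polynomial identity for $f$, which cuts $\Lambda[f]$ down to a finite subring on which the elementary lemma does apply.
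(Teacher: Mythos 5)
Your proposal is correct and follows essentially the argument behind the cited result: pass to a splitting field where $\End(M(X)_E)$ is finite, apply Lemma~\ref{finite} there, and use the nilpotence principle to transfer the idempotency back to $F$. Your way of finishing (the nilpotence relation $(f^{2n}-f^n)^N=0$ gives a monic polynomial identity, so $\Lambda[f]$ is a finite ring and Lemma~\ref{finite} applies a second time) is a clean and valid rendering of that final step.
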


We say that the {\em Krull-Schmidt principle} holds for a given pseudo-abelian category,
if every object of the category has one and unique decomposition in a finite direct
sum of indecomposable objects.
In the sequel, we are constantly using the following statement:

\begin{cor}[{\cite[\Small Corollary 35]{MR2264459}, see also \cite[\Small Corollary 2.6]{outmot}}]
\label{krull-schmidt}
Assume that the coefficient ring $\Lambda$ is finite.
The Krull-Schmidt principle holds for the pseudo-abelian Tate subcategory in
$\CM(F,\Lambda)$
generated by the motives of the geometrically split $F$-varieties satisfying the
nilpotence principle.
\qed
\end{cor}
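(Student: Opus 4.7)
The plan is to follow the standard route from Corollary~\ref{cor-finite} to Krull--Schmidt via the classical theorem that any pseudo-abelian category in which every indecomposable has a local endomorphism ring satisfies Krull--Schmidt (Atiyah/Bass; this is the version cited in \cite{MR2264459}). So the task reduces to two finiteness/locality inputs about the endomorphism rings appearing in our Tate subcategory.

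First I would show that every object $N$ of the subcategory in question decomposes into a finite direct sum of indecomposables. Since $N$ is a summand of a finite sum $M(X_1)\oplus\dots\oplus M(X_r)$ with each $X_i$ geometrically split and satisfying the nilpotence principle, $N$ is itself geometrically split and satisfies the nilpotence principle. Over a common splitting field $\bar F$, the ring $\End(N_{\bar F})$ is a finite ring of matrices over $\Lambda$, hence has only finitely many idempotents. Given an infinite strict chain of projectors $p_0,p_1,\dots\in\End(N)$ with $p_i\compose p_j=p_j=p_j\compose p_i$ for $i<j$ (which would appear if no finite decomposition existed), two of their images in $\End(N_{\bar F})$ must coincide; but then $p_i-p_j$ is simultaneously idempotent and (by the nilpotence principle) nilpotent, forcing $p_i=p_j$, a contradiction. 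Hence any $N$ admits a finite decomposition into indecomposables.

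Second, I would show that $\End(N)$ is local whenever $N$ is indecomposable in our subcategory. By Corollary~\ref{cor-finite} applied to $N$ (which is a summand of $M(X)$ for some $X$ as above, and inherits the hypotheses), every endomorphism has a power that is a projector. Since $N$ is indecomposable, any projector in $\End(N)$ is $0$ or $\id$, so every endomorphism is either nilpotent or a unit. A sum of two non-units is then a sum of two nilpotents, which is never equal to $\id$ (in any ring where $1\ne 0$, since the sum of two nilpotents in a ring where they commute with a unit\dots actually one needs to be slightly careful here, but the standard argument works: if $x$ and $y$ are nilpotent and $x+y=1$, one derives a contradiction from $1=(x+y)^{2n-1}$ expanded, each term of which contains a high power of either $x$ or $y$). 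Thus $\End(N)$ is local.

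Having these two inputs, Krull--Schmidt follows from the Azumaya-type theorem cited as \cite[Theorem 3.6 of Chapter I]{MR0249491}: in a pseudo-abelian category, uniqueness (up to permutation and isomorphism) of a decomposition into indecomposables with local endomorphism rings is automatic, and we have just established both existence of such decompositions and locality of the relevant endomorphism rings. The main point where care is required is the locality step: one must combine the finiteness of $\Lambda$, the geometric splitness, and the nilpotence principle to force every non-invertible endomorphism into the nilpotent radical; the existence step is a short diagonal argument over a splitting field, and the Azumaya theorem is invoked as a black box.
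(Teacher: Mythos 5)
Your proposal is correct and follows essentially the route the paper relies on (through its citations and Corollary \ref{cor-finite}): existence of finite decompositions by counting idempotents in the finite ring $\End(N_{\bar F})$ together with the nilpotence principle, locality of the endomorphism ring of an indecomposable object because every endomorphism has a power equal to $0$ or the identity, and then the Azumaya--Atiyah type uniqueness theorem for pseudo-abelian categories with local endomorphism rings of indecomposables. Two small repairs: for locality, your expansion of $1=(x+y)^{2n-1}$ does not work for non-commuting nilpotents, but none is needed --- if $x$ is nilpotent then $1-x$ is invertible, so two nilpotents can never sum to $1$; and since objects of the Tate subcategory are summands of sums of \emph{shifted} motives $M(X_i)(n_i)$, one should note that geometric splitness and the nilpotence property pass to such twisted direct sums (this is exactly what the cited results of \cite{MR2264459} and \cite{outmot} provide), after which your argument goes through verbatim.
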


\begin{rem}
Replacing the Chow
groups $\CH(-;\Lambda)$ by the {\em reduced} Chow groups
$\BCH(-;\Lambda)$ (cf. \cite[\Small \S72]{EKM})
in the definition of the category $\CM(F,\Lambda)$,
we get a ``simplified'' motivic category $\BCM(F,\Lambda)$
(which is still sufficient for the main purpose of this paper).
Working within this category, we do not need the nilpotence principle any more.
In particular, the Krull-Schmidt principle holds (with a simpler proof)
for the pseudo-abelian Tate subcategory in
$\BCM(F,\Lambda)$
generated by the motives of the geometrically split $F$-varieties.
\end{rem}

\section
{Splitting off a motivic summand}
\label
{Splitting off a motivic summand}

The characteristic of the base field $F$ is still arbitrary in this section.

In this section we assume that the coefficient ring $\Lambda$ is connected.
We shall often assume that $\Lambda$ is finite.

Before climbing to the main result of this section (which is Proposition \ref{prop}), let us do some warm up.

The following definition of \cite{outmot} extends some terminology of \cite{Vishik-IMQ}:

\begin{dfn}
\label{def-outer}
%
Let $M\in\CM(F,\Lambda)$ be a summand of the motive of a smooth complete
irreducible variety of dimension $d$.
The summand $M$ is called {\em upper}, if $\CH^0(M;\Lambda)\ne0$.
The summand $M$ is called {\em lower}, if $\CH_d(M;\Lambda)\ne0$.
The summand $M$ is called {\em outer}, if it is simultaneously upper and lower.
\end{dfn}

For instance, the whole motive of a smooth complete irreducible
variety is an outer summand of itself.
Another example of an outer summand is the motive given by a {\em Rost projector} (see
Definition \ref{def rost corr}).

Given a correspondence $\alpha\in\CH_{\dim X}(X\times Y;\Lambda)$ between some smooth complete irreducible varieties
$X$ and $Y$, we write $\mult\alpha\in\Lambda$ for the {\em multiplicity} of $\alpha$,
\cite[\Small definition of \S75]{EKM}.
Multiplicity of a composition of two correspondences is the product of multiplicities of the composed
correspondences (cf. \cite[\Small Corollary 1.7]{MR1751923}).
In particular, multiplicity of a projector is idempotent and therefore $\in\{0,1\}$ because the coefficient
ring $\Lambda$ is connected.

Characterizations of outer summands given in the two following Lemmas are easily obtained:

\begin{lemma}[{cf. \cite[\Small Lemmas 2.8 and 2.9]{outmot}}]
\label{p outer}
Let $X$ be a smooth complete irreducible variety.
The motive $(X,p)$ given by a projector $p\in\CH_{\dim X}(X\times X;\Lambda)$
is upper if and only if $\mult p=1$.
The motive $(X,p)$
is lower if and only if $\mult p^t=1$, where $p^t$ is the transpose of $p$.
\end{lemma}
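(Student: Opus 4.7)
The plan is to unwind the definition of the Chow groups of the motive $(X,p)$ and then use multiplicativity of multiplicity, together with connectedness of $\Lambda$, to extract the desired dichotomy. I write $d=\dim X$ throughout.

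For the upper statement, the group $\CH^0((X,p);\Lambda)$ is cut out of $\CH^0(X;\Lambda)=\Lambda\cdot[X]$ by the natural action of $p$ on codimensional Chow groups; concretely, this action sends $\alpha\in\CH^0(X;\Lambda)$ to $(\pr_1)_*(p\cdot\pr_2^*\alpha)$. Applied to $\alpha=[X]$, this yields $(\pr_1)_*(p)$, and by the very definition of multiplicity $(\pr_1)_*(p)=\mult(p)\cdot[X]$. Hence $\CH^0((X,p);\Lambda)\ne 0$ iff $\mult(p)\ne 0$. Since multiplicity is multiplicative under composition of correspondences (\cite[Corollary 1.7]{MR1751923}) and $p=p\circ p$, we have $\mult(p)=\mult(p)^2$; as $\Lambda$ is connected its only idempotents are $0$ and $1$, so $\mult(p)\ne 0$ is equivalent to $\mult(p)=1$.

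For the lower statement, the group $\CH_d((X,p);\Lambda)$ is the image of $\CH_d(X;\Lambda)=\Lambda\cdot[X]$ under the dimensional pushforward action $\alpha\mapsto(\pr_2)_*(p\cdot\pr_1^*\alpha)$ of $p$. Taking $\alpha=[X]$ produces $(\pr_2)_*(p)$, which, after switching the two projections via transposition of the correspondence, equals $(\pr_1)_*(p^t)=\mult(p^t)\cdot[X]$. The projector $p^t$ satisfies $\mult(p^t)=\mult(p^t)^2$ by the same multiplicativity argument, so $\CH_d((X,p);\Lambda)\ne 0$ is equivalent to $\mult(p^t)=1$.

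There is no real obstacle; the lemma is essentially a bookkeeping exercise. The only care needed is in identifying which of the two projections governs the action of $p$ on $\CH^{\bullet}$ versus on $\CH_{\bullet}$, and in noticing that switching these two projections is exactly what replaces $p$ by $p^t$ in the multiplicity computation.
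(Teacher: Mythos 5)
Your argument is correct and is essentially the proof the paper has in mind (it is the computation behind the cited Lemmas 2.8 and 2.9 of \cite{outmot}): one identifies $\CH^0((X,p);\Lambda)$ and $\CH_{\dim X}((X,p);\Lambda)$ with the images of the actions $\beta\mapsto(\pr_1)_*(p\cdot\pr_2^*\beta)$ and $\alpha\mapsto(\pr_2)_*(p\cdot\pr_1^*\alpha)$ on $\Lambda\cdot[X]$, giving $\mult(p)$ and $\mult(p^t)$ respectively, and then uses idempotency of the multiplicity of a projector together with connectedness of $\Lambda$ exactly as in the paragraph preceding the lemma. You also assigned the two projections to the upper and lower cases correctly, which is the only place where a slip could have occurred.
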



\begin{lemma}[{cf. \cite[\Small Lemma 2.12]{outmot}}]
\label{tate outer}
Assume that a summand $M$ of the motive of a smooth complete irreducible
variety of dimension $d$ decomposes into a sum of Tate motives.
Then $M$ is upper if and only if the Tate motive $\Lambda$ is present in the
decomposition;
it is lower if and only if the Tate motive $\Lambda(d)$ is present in the decomposition.
\end{lemma}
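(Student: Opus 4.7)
The plan is to unwind both characterizations by additivity of graded Chow groups across a direct-sum decomposition into Tate summands.

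First I would fix a hypothesized decomposition $M\simeq\Oplus_k \Lambda(i_k)$ and apply additivity of $\CH^0$ and $\CH_d$ over direct sums in $\CM(F,\Lambda)$ to obtain
$$\CH^0(M;\Lambda)\simeq\Oplus_k\CH^0(\Lambda(i_k);\Lambda)\quad\text{and}\quad \CH_d(M;\Lambda)\simeq\Oplus_k\CH_d(\Lambda(i_k);\Lambda).$$

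Next I would invoke the standard calculation that, for every integer $i$, both $\CH^j(\Lambda(i);\Lambda)$ and $\CH_j(\Lambda(i);\Lambda)$ equal $\Lambda$ when $j=i$ and vanish otherwise. Specialising the first sum to $j=0$: it is nonzero exactly when some $i_k=0$, i.e. when $\Lambda=\Lambda(0)$ occurs among the summands. Since the standing assumption that $\Lambda$ is connected forces $\Lambda\neq 0$, this yields the upper equivalence. Specialising the second sum to $j=d$: it is nonzero exactly when some $i_k=d$, i.e. when $\Lambda(d)$ occurs among the summands, yielding the lower equivalence in the same way.

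There is no serious obstacle here: the lemma is a direct unpacking of Definition \ref{def-outer}, relying only on additivity of Chow groups of motives across direct sums and the Chow groups of Tate motives (both contained in \cite[\Small Chapter XII]{EKM}). Note also that the Krull--Schmidt principle is not invoked — the argument works with respect to any fixed decomposition of $M$ into Tate motives, and the conditions ``$\Lambda$ appears'' and ``$\Lambda(d)$ appears'' are thereby seen to be intrinsic to $M$ through their equivalent Chow-theoretic characterisations.
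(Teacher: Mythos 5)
Your argument is correct and is precisely the routine computation the paper has in mind: it states this lemma as ``easily obtained'' (citing Lemma 2.12 of the upper-motives preprint), and that reference's proof is the same unwinding of the definitions via additivity of $\CH^0$ and $\CH_d$ over direct sums together with the Chow groups of Tate motives, using connectedness of $\Lambda$ to know $\Lambda\ne0$. Nothing further is needed.
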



The following statement generalizes
(the finite coefficient version of)
\cite[\Small Corollary 3.9]{MR2066515}:

\begin{lemma}
\label{tuda-suda}
Assume that the coefficient ring $\Lambda$ is finite.
Let $X$ and $Y$ be smooth complete irreducible varieties such that
there exist multiplicity $1$ correspondences
$$
\alpha\in\CH_{\dim X}(X\times Y;\Lambda)\;\;
\text{ and } \;\;\beta\in\CH_{\dim Y}(Y\times X;\Lambda).
$$
Assume that $X$ is geometrically split and satisfies the nilpotence principle.
Then there is an upper summand of $M(X)$ isomorphic to an upper summand of $M(Y)$.
Moreover, for any upper summand $M_X$ of $M(X)$ and any upper summand $M_Y$ of $M(Y)$,
there is an upper summand of $M_X$ isomorphic to an upper summand of $M_Y$.
\end{lemma}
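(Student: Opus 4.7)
The plan is to realize the common upper summand first inside $M_X$, using the nilpotence principle for $X$, and then transport it to a summand of $M_Y$ by purely formal manipulations. The first assertion of the lemma follows from the ``Moreover'' part by taking $M_X = M(X)$ and $M_Y = M(Y)$, both upper since the identity (i.e.\ diagonal) correspondence has multiplicity $1$; so I focus on the ``Moreover'' statement.

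Let $p_X, p_Y$ be the projectors cutting out $M_X, M_Y$; by Lemma \ref{p outer} we have $\mult p_X = \mult p_Y = 1$. I would form $f := p_Y \compose \alpha \compose p_X$ and $g := p_X \compose \beta \compose p_Y$, so that $\phi := g \compose f \in \End(M(X))$ has multiplicity $1$ and satisfies $p_X \compose \phi \compose p_X = \phi$ (since $p_X^2 = p_X$ and $p_Y^2 = p_Y$). Since $X$ is geometrically split and satisfies the nilpotence principle, Corollary \ref{cor-finite} produces an $N \geq 1$ with $e_X := \phi^N$ idempotent. Then $\mult e_X = 1$ and $e_X$ still factors through $p_X$, so by Lemma \ref{p outer} the motive $(M(X), e_X)$ is an upper summand of $M_X$.

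To transport $(M(X), e_X)$ to $M_Y$, I would introduce $\iota := f$ and $\pi := \phi^{N-1} \compose g$, which satisfy $\pi \compose \iota = \phi^N = e_X$. The purely formal identity
\[
(\iota \compose \pi)^2 \;=\; \iota \compose (\pi \compose \iota) \compose \pi \;=\; \iota \compose e_X \compose \pi,
\]
combined with $e_X^2 = e_X$, yields $(\iota \compose \pi)^3 = \iota \compose e_X^2 \compose \pi = (\iota \compose \pi)^2$, so $e_Y := (\iota \compose \pi)^2$ is idempotent in $\End(M(Y))$. A direct check shows $e_Y$ factors through $p_Y$ and has multiplicity $1$, so $(M(Y), e_Y)$ is an upper summand of $M_Y$. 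Finally, the two morphisms $\iota \compose e_X$ and $e_X \compose \pi$ are mutually inverse isomorphisms between $(M(X), e_X)$ and $(M(Y), e_Y)$, because
\[
(e_X \compose \pi)(\iota \compose e_X) = e_X \compose e_X \compose e_X = e_X, \qquad (\iota \compose e_X)(e_X \compose \pi) = \iota \compose e_X \compose \pi = e_Y,
\]
which are the respective identity endomorphisms.

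The main obstacle is that $Y$ is not assumed to be geometrically split or to satisfy the nilpotence principle, so Corollary \ref{cor-finite} is unavailable inside $\End(M(Y))$: one cannot turn a multiplicity-$1$ endomorphism of $M_Y$ into a projector by taking powers. The key conceptual step is precisely that once $e_X$ has been produced on the $X$-side, the idempotence of $e_Y$ is entirely formal algebra via the identity $(\iota \compose \pi)^3 = (\iota \compose \pi)^2$; the nilpotence principle is only used on $X$, where it is assumed.
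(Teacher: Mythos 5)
Your proposal is correct and follows essentially the same route as the paper: apply Corollary \ref{cor-finite} to the composition on the $X$-side (after pre/post-composing $\alpha$ and $\beta$ with the projectors defining $M_X$ and $M_Y$), transport the resulting idempotent formally to the $Y$-side, and verify uppness via multiplicativity of $\mult$ and Lemma \ref{p outer}. The only cosmetic difference is that you deduce the first assertion from the ``Moreover'' part with $p_X=\Delta_X$, $p_Y=\Delta_Y$, whereas the paper proves the first assertion directly and obtains the ``Moreover'' part by the same replacement of $\alpha,\beta$; the explicit exponents and mutually inverse morphisms differ only superficially.
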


\begin{proof}
By Corollary \ref{cor-finite}, the composition $p:=(\beta\compose\alpha)^{\compose n}$ for some $n\geq1$
is a projector.
Therefore $q:=(\alpha\compose\beta)^{\compose 2n}$ is also a projector and
the summand $(X,p)$ of $M(X)$ is isomorphic to the summand $(Y,q)$ of $M(Y)$:
mutually inverse isomorphisms are, say,
$$
\alpha\compose(\beta\compose\alpha)^{\compose(2n)}:(X,p)\to(Y,q)\;\text{ and }\;
\beta\compose(\alpha\compose\beta)^{\compose(4n-1)}:(Y,q)\to(X,p)\;.
$$
Since $\mult p=(\mult\beta\cdot\mult\alpha)^n=1$ and similarly
$\mult q=1$, the summand $(X,p)$ of $M(X)$ and the summand $(Y,q)$ of $M(Y)$
are upper by Lemma \ref{p outer}.

We have proved the first statement of Lemma \ref{tuda-suda}.
As to the second statement,
let
$$
p'\in\CH_{\dim X}(X\times X;\Lambda)\;\text{ and }\;q'\in\CH_{\dim Y}(Y\times Y;\Lambda)
$$
be projectors such that $M_X=(X,p')$ and $M_Y=(Y,q')$.
Replacing $\alpha$ and $\beta$ by $q'\compose\alpha\compose p'$ and $p'\compose\beta\compose q'$,
we get isomorphic upper motives $(X,p)$ and $(Y,q)$ which are summands of $M_X$ and $M_Y$.
\end{proof}

\begin{rem}
\label{outer unique}
Assume that the coefficient ring $\Lambda$ is finite.
Let $X$ be a geometrically split irreducible smooth complete
variety satisfying the nilpotence principle.
Then the complete motivic decomposition of $X$ contains precisely one upper
summand and it follows by Corollary \ref{krull-schmidt} (or by Lemma \ref{tuda-suda})
that an upper indecomposable summands of $M(X)$ is unique up to
an isomorphism.
(Of course, the same is true for the lower summands.)
\end{rem}

Here comes the needed replacement of \cite[\Small Theorem 4.15]{MR2066515}:

\begin{prop}
\label{prop}
Assume that the coefficient ring $\Lambda$ is finite.
Let $X$ be a geometrically split,
geometrically irreducible variety satisfying the nilpotence principle and let
$M$ be a motive.
Assume that there exists a field extension $E/F$ such that
\begin{enumerate}
\item
the field extension $E(X)/F(X)$ is purely transcendental;
\item
the upper indecomposable summand of $M(X)_E$ is also lower and is a summand of $M_E$.
\end{enumerate}
Then the upper indecomposable summand of $M(X)$ is a summand of $M$.
\end{prop}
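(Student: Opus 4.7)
My plan is to construct $F$-morphisms $\phi\colon M(X)\to M$ and $\psi\colon M\to M(X)$ with $\mult(\psi\compose\phi)=1$, and then to deduce that $U$, the upper indecomposable summand of $M(X)$ (unique up to isomorphism by Remark \ref{outer unique}), is a summand of $M$. Granted such a pair, Corollary \ref{cor-finite} provides $n\geq 1$ with $q\Dfn(\psi\compose\phi)^{\compose n}$ a projector, and by multiplicativity of $\mult$ one has $\mult(q)=1$, so Lemma \ref{p outer} shows $(X,q)$ is an upper summand of $M(X)$. Setting $\Phi\Dfn\phi\compose(\psi\compose\phi)^{\compose(n-1)}$ and $\Psi\Dfn\psi$, one has $\Psi\compose\Phi=q$; writing $j\colon(X,q)\to M(X)$ and $r\colon M(X)\to(X,q)$ for the canonical inclusion and retraction with $r\compose j=\id_{(X,q)}$ and $j\compose r=q$, the composition $(r\compose\Psi)\compose(\Phi\compose j)=r\compose q\compose j=\id_{(X,q)}$ exhibits $(X,q)$ as a summand of $M$. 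Applying Krull--Schmidt (Corollary \ref{krull-schmidt}) to $(X,q)$ inside the Tate subcategory containing $M(X)$ identifies its unique upper indecomposable summand as $U$, so $U$ is a summand of $M$.

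To construct $\phi$ and $\psi$, I would start over $E$: hypothesis (2) gives $\alpha\colon M(X)_E \to M_E$ and $\beta\colon M_E \to M(X)_E$ with $\beta\compose\alpha$ a projector of multiplicity $1$ on $M(X)_E$. Passing to $E(X)$, the generic point of $X_E$ provides an $E(X)$-rational point of $X_{E(X)}$, hence motivic morphisms $\iota\colon \Lambda \to M(X)_{E(X)}$ and $\pi\colon M(X)_{E(X)} \to \Lambda$ with $\pi\compose\iota=\id_\Lambda$. Unwinding the definitions of composition and multiplicity shows that $\pi\compose(\beta\compose\alpha)_{E(X)}\compose\iota\in\End\Lambda=\Lambda$ is exactly $\mult(\beta\compose\alpha)=1$, so $\Lambda$ splits off $M_{E(X)}$ via $\tilde\alpha\Dfn\alpha_{E(X)}\compose\iota$ and $\tilde\beta\Dfn\pi\compose\beta_{E(X)}$.

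Finally, I proceed in two substeps. First, the pure-transcendence hypothesis (1) allows descending the $\Lambda$-summand of $M_{E(X)}$ to a $\Lambda$-summand of $M_{F(X)}$: in the model case $M=M(Y)$ for $Y$ irreducible, this amounts to producing a degree-$1$ zero-cycle on $Y_{F(X)}$ from one on $Y_{E(X)}$, which is standard for purely transcendental extensions. Second, the localization sequence for the generic point $\Spec F(X)\hookrightarrow X$ gives surjections $\CH_{\dim X}(X\times Y;\Lambda)\onto\CH_0(Y_{F(X)};\Lambda)$ and $\CH_{\dim Y}(Y\times X;\Lambda)\onto\CH^0(Y_{F(X)};\Lambda)$, realising $\Hom_{F(X)}(\Lambda, M(Y)_{F(X)})$ and $\Hom_{F(X)}(M(Y)_{F(X)},\Lambda)$ respectively; lifting $a\colon\Lambda\to M_{F(X)}$ and $b\colon M_{F(X)}\to\Lambda$ with $b\compose a=1$ then produces the required $\phi, \psi$, and the composite $\psi\compose\phi$ restricts at the generic point to $b\compose a=1$, giving $\mult(\psi\compose\phi)=1$. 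The main obstacle is making the $E(X)/F(X)$ descent of the $\Lambda$-summand work for an arbitrary motive $M$ rather than just $M=M(Y)$; I would expect the appropriate general descent to follow from the identification of the relevant $\Hom$-groups inside $\CM(F,\Lambda)$ with Chow groups, combined with the preservation of zero-cycle indices under purely transcendental field extensions.
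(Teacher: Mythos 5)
Your first paragraph (reducing the Proposition to producing $F$-morphisms $\phi\colon M(X)\to M$, $\psi\colon M\to M(X)$ with $\mult(\psi\compose\phi)=1$) is fine --- this is exactly the cited Lemma 2.14 of \cite{outmot} --- and your construction of $\phi$, by lifting along the surjection $\CH_{\dim X}(X\times Y;\Lambda)\onto\CH_0(Y_{F(X)};\Lambda)$ and descending from $E(X)$ to $F(X)$ via pure transcendence, is essentially the paper's construction of $f'$. The gap is in the construction of $\psi$. The second surjection you invoke does not exist: pulling back along the generic point $\Spec F(X)\to X$ drops dimension by $\dim X$, so it maps $\CH_{\dim Y}(Y\times X;\Lambda)=\Hom(M(Y),M(X))$ onto $\CH^{\dim X}(Y_{F(X)};\Lambda)$, not onto $\CH^0(Y_{F(X)};\Lambda)=\Hom(M(Y)_{F(X)},\Lambda)$. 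Nor is this a repairable bookkeeping slip: for $\psi$ defined over $F$, the generic point only sees $\xi_X^*(\psi\compose\phi)=\psi_{F(X)}\compose a$, whose degree is $\deg\bigl(a\cdot((\pr_Y)_*\psi)_{F(X)}\bigr)=\mult(\psi)\cdot\deg(a)$, and $(\pr_Y)_*\psi$ lies in $\CH^0(Y;\Lambda)=\Lambda\cdot[Y]$ (for irreducible $Y$); so your claim that $\psi\compose\phi$ ``restricts at the generic point to $b\compose a$'' holds only when $b$ is already defined over $F$, which is precisely what has to be proved. Concretely, you would need an $F$-correspondence $Y\corr X$ of unit multiplicity (for $\Lambda=\F_2$: an odd-degree zero-cycle on $X_{F(Y)}$), and nothing in your construction provides it.

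The tell-tale symptom is that you never use the hypothesis that the upper indecomposable summand of $M(X)_E$ is also \emph{lower}. That hypothesis is exactly what the paper uses to obtain the backward morphism: having descended $f$ to $f'$ over $F$ with $\mult(g\compose f'_E)=1$, lower-ness yields $\mult\bigl((f'_E)^t\compose g^t\bigr)=1$, so the same generic-point descent --- which is available only for morphisms whose \emph{source} is $M(X)$, because only then does $\xi^*$ give a surjection in the correct degree --- can be applied to the transposed pair, producing $g'\colon M\to M(X)$ over $F$; one then finishes with multiplicativity of $\mult$ and the uniqueness of upper (and lower) indecomposable summands. Without this transposition step your backward lifting is unfounded, so the proof as written does not go through.
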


\begin{proof}
We may assume that $M=(Y,p,n)$ for some irreducible smooth complete $F$-variety $Y$, a
projector $p\in\CH_{\dim Y}(Y\times Y;\Lambda)$, and an integer $n$.

By the assumption (2), we have
morphisms of motives $f:M(X)_E\to M_E$ and $g:M_E\to M(X)_E$
with
$\mult(g\compose f)=1$.
By \cite[\Small Lemma 2.14]{outmot},
in order to prove Proposition \ref{prop}, it suffices to
construct morphisms $f':M(X)\to M$ and $g':M\to M(X)$ (over
$F$) with $\mult(g'\compose f')=1$.

Let $\xi:\Spec F(X)\to X$ be the generic point of the (irreducible) variety $X$.
For any $F$-scheme $Z$, we write $\xi_Z$ for the
morphism $\xi_Z=(\xi\times\id_Z):Z_{F(X)}=\Spec_{F(X)}\times Z\to X\times Z$.
Note that for any $\alpha\in\CH(X\times Z)$, the image
$\xi_Z^*(\alpha)\in\CH(Z_{F(X)})$ of $\alpha$ under the pull-back homomorphism
$\xi_Z^*:\CH(X\times Z,\Lambda)\to\CH(Z_{F(X)},\Lambda)$
coincides with the composition of correspondences $\alpha\compose[\xi]$,
\cite[\Small Proposition 62.4(2)]{EKM},
where $[\xi]\in\CH_0(X_{F(X)},\Lambda)$ is the class of the point $\xi$:
\begin{equation*}
\tag{$*$}
\xi_Z^*(\alpha)=\alpha\compose[\xi].
\end{equation*}

In the commutative square
$$
\begin{CD}
\CH(X_E\times Y_E;\Lambda)@>{\xi^*_{Y_E}}>>\CH(Y_{E(X)};\Lambda)\\
@A{\res_{E/F}}AA @A{\res_{E(X)/F(X)}}AA\\
\CH(X\times Y;\Lambda)@>{\xi^*_Y}>>\CH(Y_{F(X)};\Lambda)
\end{CD}
$$
the change of field homomorphism $\res_{E(X)/F(X)}$ is surjective\footnote{In
fact, $\res_{E(X)/F(X)}$ is even an isomorphism, but we do not need its injectivity
(which can be obtained with a help of a specialization).}
 because
of the assumption (1) by the homotopy invariance of Chow groups
\cite[\Small Theorem 57.13]{EKM} and
by the localization property of Chow groups
\cite[\Small Proposition 57.11]{EKM}.
Moreover, the pull-back homomorphism $\xi^*_Y$ is
surjective by \cite[\Small Proposition 57.11]{EKM}.
It follows that there exists an element $f'\in\CH(X\times Y;\Lambda)$ such
that $\xi^*_{Y_E}(f'_E)=\xi^*_{Y_E}(f)$.

Recall that $\mult(g\compose f)=1$.
On the other hand,
$\mult(g\compose f'_E)=\mult(g\compose f)$.
Indeed,
$\mult(g\compose f)=\deg\xi_{X_E}^*(g\compose f)$
by \cite[\Small Lemma 75.1]{EKM}, where $\deg:\CH(X_{E(X)})\to\Lambda$ is the degree homomorphism.
Furthermore, $\xi_{X_E}^*(g\compose f)=(g\compose f)\compose[\xi_E]$ by ($*$).
Finally, $(g\compose f)\compose[\xi_E]=g\compose(f\compose[\xi_E])$
and $f\compose[\xi_E]=\xi_{Y_E}^*(f)=\xi_{Y_E}^*(f'_E)$ by the construction of $f'$.


Since $\mult(g\compose f'_E)=1$ and the indecomposable upper
summand of $M(X)_E$ is lower, we have
$\mult((f'_E)^t\compose g^t)=1$.
Therefore we may apply the above procedure to the dual morphisms
\begin{multline*}
g^t:M(X)_E\to (Y,p,n+\dim X-\dim Y)_E
\\
\text{ and } \;\;\;\;
(f'_E)^t:(Y,p,n+\dim X-\dim Y)_E\to M(X)_E.
\end{multline*}
This way we get
a morphism $g':M\to M(X)$ such that
$\mult((f')^t\compose(g')^t)=1$.
It follows that $\mult(g'\compose f')=1$.
\end{proof}


\begin{rem}
\label{BCM}
Replacing $\CM(F,\Lambda)$ by $\BCM(F,\Lambda)$ in Proposition \ref{prop},
we get a weaker version of Proposition \ref{prop} which is still
sufficient for our application.
The nilpotence principle is no more needed in the proof of the weaker
version.
Because of that, there is no more need to assume that $X$ satisfies the nilpotence
principle.
\end{rem}

\section
{Rost correspondences}
\label
{Rost correspondences}

In this section, $X$ stands for a smooth complete geometrically irreducible variety of a positive dimension $d$.

The coefficient ring $\Lambda$ of the motivic category is $\F_2$ in this section.
We write $\Ch(-)$ for the Chow group $\CH(-;\F_2)$ with coefficients in $\F_2$.
We write $\deg_{X/F}$ for the degree homomorphism $\Ch_0(X)\to\F_2$.


\begin{dfn}
\label{def rost corr}
An element $\rho\in\Ch_d(X\times X)$ is called a {\em Rost
correspondence} (on $X$), if $\rho_{F(X)}=\chi_1\times[X_{F(X)}]+[X_{F(X)}]\times\chi_2$
for
some $0$-cycle classes
$\chi_1,\chi_2\in\Ch_0(X_{F(X)})$ of degree $1$.
A {\em Rost projector} is a Rost correspondence which is a projector.
\end{dfn}

\begin{rem}
Our definition of a Rost correspondence differs from the definition of a {\em special
correspondence} in \cite{markus}.
Our definition is weaker in the sense that a special correspondence on $X$ (which is an element of the {\em integral} Chow group $\CH_d(X\times X)$) considered modulo $2$ is a Rost correspondence but not any Rost correspondence is obtained this way.
This difference gives a reason to reproduce below some results of \cite{markus}.
Actually, some of the results below are formally more general than the corresponding results of \cite{markus};
their proofs, however, are essentially the same.
\end{rem}

\begin{rem}
Clearly, the set of all Rost correspondences on $X$ is stable under transposition and composition.
In particular, if $\rho$ is a Rost correspondence, then its both symmetrizations
$\rho^t\compose\rho$ and $\rho\compose\rho^t$ are (symmetric) Rost correspondences.
Writing $\rho_{F(X)}$ as in Definition \ref{def rost corr}, we have
$(\rho^t\compose\rho)_{F(X)}=\chi_1\times[X_{F(X)}]+[X_{F(X)}]\times\chi_1$
(and $(\rho\compose\rho^t)_{F(X)}=\chi_2\times[X_{F(X)}]+[X_{F(X)}]\times\chi_2$).
\end{rem}

\begin{lemma}
Assume that the variety $X$ is projective homogeneous.
Let $\rho\in\Ch_d(X\times X)$ be a projector.
If there exists a field extension $E/F$ such that
$\rho_E=\chi_1\times[X_E]+[X_E]\times\chi_2$
for some $0$-cycle classes $\chi_1,\chi_2\in\Ch_0(X_E)$ of degree $1$,
then $\rho$ is a Rost projector.
\end{lemma}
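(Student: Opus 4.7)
The plan is to establish the motivic decomposition $(X_{F(X)}, \rho_{F(X)}) \cong \F_2 \oplus \F_2(d)$ (with $d = \dim X$) and then read the Rost form off the resulting projectors. Since $\rho_E$ has the stated form, $\mult \rho = \deg \chi_2 = 1$ and $\mult \rho^t = \deg \chi_1 = 1$; these multiplicities persist under base change, so by Lemma~\ref{p outer} the motive $(X_{F(X)}, \rho_{F(X)})$ is both upper and lower.

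First, I would identify the geometric motive. Let $\Omega$ be an algebraic closure of a common overfield of $E$ and $F(X)$: base-changing the formula for $\rho_E$ gives $\rho_\Omega = [\pt] \times [X_\Omega] + [X_\Omega] \times [\pt]$, because $X_\Omega$ is split, so $\Ch_0(X_\Omega) = \F_2 \cdot [\pt]$ and each $(\chi_i)_\Omega$ equals $[\pt]$. Since $X_{\bar{F(X)}}$ is a split projective homogeneous, hence cellular, variety, the change-of-field homomorphism $\Ch_d(X_{\bar{F(X)}} \times X_{\bar{F(X)}}) \to \Ch_d(X_\Omega \times X_\Omega)$ is an isomorphism, so the same split formula holds over $\bar{F(X)}$, and $(X_{\bar{F(X)}}, \rho_{\bar{F(X)}}) \cong \F_2 \oplus \F_2(d)$ has geometric rank~$2$.

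Next, I would apply Krull--Schmidt (Corollary~\ref{krull-schmidt}) over $F(X)$, using that $X_{F(X)}$ is projective homogeneous and hence geometrically split and satisfies the nilpotence principle. The generic point $\xi$ of $X$ is an $F(X)$-rational point of $X_{F(X)}$, making $\F_2$ (with projector $[X_{F(X)}] \times [\xi]$) an upper indecomposable summand of $M(X_{F(X)})$; by Remark~\ref{outer unique} this is the unique such summand up to isomorphism, and dually $\F_2(d)$ is the unique lower one. Since $(X_{F(X)}, \rho_{F(X)})$ is outer, its indecomposable decomposition must contain an upper summand ($\cong \F_2$) and a lower summand ($\cong \F_2(d)$); these already exhaust the geometric rank~$2$, so $(X_{F(X)}, \rho_{F(X)}) \cong \F_2 \oplus \F_2(d)$. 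This descent of the split decomposition from geometry to $F(X)$ is the main obstacle and relies crucially on $\xi$ to exhibit $\F_2$ and $\F_2(d)$ as summands of $M(X_{F(X)})$.

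Finally, write $\rho_{F(X)} = e_1 + e_2$ for orthogonal projectors with $(X_{F(X)}, e_1) \cong \F_2$ and $(X_{F(X)}, e_2) \cong \F_2(d)$. Since $X_{F(X)}$ is irreducible, $\Hom(M(X_{F(X)}), \F_2) = \Ch_d(X_{F(X)}) = \F_2 \cdot [X_{F(X)}]$; any projector realizing an $\F_2$ summand therefore has the form $[X_{F(X)}] \times \chi_2$ for some $\chi_2 \in \Ch_0(X_{F(X)})$ of degree~$1$, and dually any projector realizing an $\F_2(d)$ summand has the form $\chi_1 \times [X_{F(X)}]$ for some $\chi_1$ of degree~$1$. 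Hence $\rho_{F(X)} = \chi_1 \times [X_{F(X)}] + [X_{F(X)}] \times \chi_2$, which is the Rost form.
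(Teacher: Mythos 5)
Your proof is correct, but it takes a genuinely different route from the paper's. The paper applies the Chernousov--Gille--Merkurjev decomposition of the isotropic variety $X_{F(X)}$, namely $M(X)_{F(X)}\simeq\F_2\oplus M\oplus\F_2(d)$ with the middle term built from shifted motives of smaller homogeneous varieties; this forces $\End M(X)_{F(X)}$ to split as $\F_2\times\End M\times\F_2$, and then a short nilpotence computation (the difference between $\rho_{F(X)}$ and the candidate correspondence $\chi\times[X_{F(X)}]+[X_{F(X)}]\times\chi$ vanishes over $E(X)$, hence is nilpotent, hence zero after raising the projector to a power) shows $\rho_{F(X)}$ literally equals the Rost form. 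You instead first establish the motivic isomorphism $(X,\rho)_{F(X)}\simeq\F_2\oplus\F_2(d)$ --- using multiplicities and Lemma \ref{p outer} to see that $(X,\rho)$ is outer, the generic point to exhibit $\F_2$ and $\F_2(d)$ as summands of $M(X_{F(X)})$, Remark \ref{outer unique} and Corollary \ref{krull-schmidt} to identify them as the unique upper and lower indecomposables, and the geometric rank count over $\bar{F(X)}$ --- and then read the Rost form off the two projectors exactly as in the paper's companion lemma characterizing projectors with $(X,\rho)\simeq\F_2\oplus\F_2(d)$; in effect you prove Corollary \ref{vid} directly. What each buys: the paper's argument stays at the level of correspondences and needs only the CGM theorem (plus nilpotence), while yours trades that for the Krull--Schmidt/upper-motive machinery of \S3--4 and needs from the isotropy of $X_{F(X)}$ only the existence of a rational point. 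Two small points worth making explicit in your write-up: the step ``these already exhaust the geometric rank $2$'' should be justified by noting that any further indecomposable summand of $(X,\rho)_{F(X)}$ becomes zero over $\bar{F(X)}$ by Krull--Schmidt there and is then zero by the nilpotence principle (its projector is a nilpotent idempotent); and whether $[X_{F(X)}]\times[\xi]$ or its transpose cuts out $\F_2$ rather than $\F_2(d)$ is a matter of composition conventions, which is harmless since the final expression $\chi_1\times[X_{F(X)}]+[X_{F(X)}]\times\chi_2$ is symmetric in the two roles.
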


\begin{proof}
According to \cite[\Small Theorem 7.5]{MR2110630},
there exist some integer $n\geq0$ and for $i=1,\dots,n$ some integers
$r_i>0$ and some projective homogeneous varieties $X_i$ satisfying
$\dim X_i+r_i<d$ such that for
$M=\Oplus_{i=1}^nM(X_i)(r_i)$
the motive $M(X)_{F(X)}$ decomposes
as $\F_2\oplus M\oplus\F_2(d)$.
Since there is no non-zero morphism between different summands of this three terms decomposition,
the ring $\End M(X)$ decomposes in the product of rings
$$
\End\F_2\times\End M\times\End\F_2(d)=
\F_2\times\End M\times\F_2.
$$

Let $\chi\in\Ch_0(X_{F(X)})$ be a $0$-cycle class of degree $1$.
We set
\begin{multline*}
\rho'=\chi\times[X_{F(X)}]+[X_{F(X)}]\times\chi\in
\F_2\times\F_2
\\
\subset
\F_2\times\End M\times\F_2=\End M(X)_{F(X)}=
\Ch_d(X_{F(X)}\times X_{F(X)})
\end{multline*}
and
we show that $\rho_{F(X)}=\rho'$.
The difference $\varepsilon=\rho_{F(X)}-\rho'$ vanishes over $E(X)$.
Therefore $\varepsilon$ is a nilpotent element of $\End M$.
Choosing a positive integer $m$ with $\varepsilon^m=0$,
we get
\begin{equation*}
\rho_{F(X)}=\rho_{F(X)}^m=(\rho'+\varepsilon)^m=
(\rho')^m+\varepsilon^m=(\rho')^m=\rho'.
\qedhere
\end{equation*}
\end{proof}

\begin{lemma}
Let $\rho\in\Ch_d(X\times X)$ be a projector.
The motive $(X,\rho)$ is isomorphic to $\F_2\oplus\F_2(d)$ iff
$\rho=\chi_1\times[X]+[X]\times\chi_2$ for some
some $0$-cycle classes $\chi_1,\chi_2\in\Ch_0(X)$  of degree $1$.
\end{lemma}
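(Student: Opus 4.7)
The plan is to prove both implications by exhibiting or extracting a quadruple of morphisms $f\colon\F_2\to M(X)$, $g\colon M(X)\to\F_2$, $f'\colon\F_2(d)\to M(X)$, $g'\colon M(X)\to\F_2(d)$ realizing (or certifying) the decomposition $(X,\rho)\cong\F_2\oplus\F_2(d)$, using the standard identifications $\Hom(\F_2,M(X))=\Ch^0(X)=\F_2\cdot[X]$, $\Hom(M(X),\F_2)=\Ch_0(X)$, $\Hom(\F_2(d),M(X))=\Ch_0(X)$, and $\Hom(M(X),\F_2(d))=\Ch^0(X)=\F_2\cdot[X]$. The two computations I rely on are: for any $\chi\in\Ch_0(X)$, the scalar composition $\chi\circ[X]\colon\F_2\to M(X)\to\F_2$ equals $\deg\chi\in\F_2$; and the composition in the opposite order, viewed as a correspondence on $X\times X$, is an external product, of the form $\chi\times[X]$ or $[X]\times\chi$ according to whether the composition passes through $\F_2$ (with $[X]$ on the incoming side) or through $\F_2(d)$ (with $[X]$ on the outgoing side). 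The cross compositions automatically vanish because $\Hom(\F_2,\F_2(d))=0=\Hom(\F_2(d),\F_2)$ for $d>0$.

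For the backward direction, given $\rho=\chi_1\times[X]+[X]\times\chi_2$ with $\deg\chi_1=\deg\chi_2=1$, I set $f=[X]$, $g=\chi_1$, $f'=\chi_2$, $g'=[X]$. The four relations $g\circ f=1=g'\circ f'$, $g\circ f'=0=g'\circ f$, and $f\circ g+f'\circ g'=\rho$ follow immediately from the observations above. They formally imply the factoring conditions $\rho\circ f=f$ and $g\circ\rho=g$ (and their primed analogues); for instance $\rho\circ f=f\circ(g\circ f)+f'\circ(g'\circ f)=f$. Consequently $f\oplus f'\colon\F_2\oplus\F_2(d)\to(X,\rho)$ and $g\oplus g'\colon(X,\rho)\to\F_2\oplus\F_2(d)$ are mutually inverse isomorphisms.

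For the forward direction, an isomorphism $(X,\rho)\cong\F_2\oplus\F_2(d)$ supplies such a quadruple automatically, and the scalar relations $g\circ f=1$, $g'\circ f'=1$ combined with the identifications above force $f=g'=[X]$ and identify $g$ and $f'$ with $0$-cycle classes of degree $1$, which we rename $\chi_1$ and $\chi_2$. The relation $f\circ g+f'\circ g'=\rho$, after applying the external-product formula to each composite, becomes $\rho=\chi_1\times[X]+[X]\times\chi_2$, as required. The only substantive obstacle is keeping the four Hom identifications and the external-product formula for composition through a point cleanly straight; once these are in hand, the argument is symmetric between the two directions and reduces to degree calculations.
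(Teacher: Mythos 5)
Your proof is correct and is essentially the paper's own argument: you encode an isomorphism $(X,\rho)\simeq\F_2\oplus\F_2(d)$ by a quadruple $f,f',g,g'$ of morphisms to and from $M(X)$, and translate mutual inverseness into the identity $\rho=f\compose g+f'\compose g'$ (a sum of two external products of a degree-one $0$-cycle class with $[X]$) together with the two degree-one scalar conditions, the cross terms vanishing because $\Hom(\F_2,\F_2(d))=0=\Hom(\F_2(d),\F_2)$ for $d>0$. The only slip is that your identifications of the Hom groups are transposed relative to the convention of $\CM(F,\Lambda)$ used in the paper, where $\Hom(\F_2,M(X))=\Ch_0(X)$, $\Hom(M(X),\F_2)=\Ch^0(X)$, $\Hom(\F_2(d),M(X))=\Ch_d(X)$ and $\Hom(M(X),\F_2(d))=\Ch^d(X)$; since the statement is invariant under transposing $\rho$ (which merely exchanges $\chi_1$ and $\chi_2$), this only swaps which Tate summand plays which role and does not affect the validity of the argument.
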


\begin{proof}
A morphism $\F_2\oplus\F_2(d)\to(X,\rho)$ is given by
some
$$
f\in\Hom\big(\F_2,M(X)\big)=\Ch_0(X)\;
\text{ and }\;
f'\in\Hom\big(\F_2(d),M(X)\big)=\Ch_d(X).
$$
A morphism in the inverse direction is given by some
$$
g\in\Hom(M(X),\F_2)=\Ch^0(X)\;
\text{ and }\;
g'\in\Hom(M(X),\F_2(d))=\Ch^d(X).
$$
The two morphisms
$\F_2\oplus\F_2(d)\leftrightarrow(X,\rho)$
are mutually inverse isomorphisms iff
$\rho=f\times g+f'\times g'$ and $\deg_{X/F}(fg)=1=\deg_{X/F}(f'g')$.
The degree condition means that $f'=[X]=g$ and $\deg_{X/F}(f)=1=\deg_{X/F}(g')$.
\end{proof}

\begin{cor}
\label{vid}
If $X$ is projective homogeneous and $\rho$ is a projector on $X$ such that
$$(X,\rho)_E\simeq\F_2\oplus\F_2(d)$$ for some field extension $E/F$, then
$\rho$ is a Rost projector.
\qed
\end{cor}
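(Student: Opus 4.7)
The corollary is essentially a direct splice of the two lemmas that immediately precede it, so the plan is very short.

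First, I would apply the second preceding lemma (the ``iff'' characterization of projectors whose motive is $\F_2 \oplus \F_2(d)$) over the base field $E$ rather than $F$. This is legitimate: the lemma's statement is valid over any field, and all its inputs ($\Lambda = \F_2$, $X$ smooth complete geometrically irreducible of dimension $d$, $\rho$ a projector in degree $d$) are preserved under scalar extension to $E$. The hypothesis $(X,\rho)_E \simeq \F_2 \oplus \F_2(d)$ then yields a presentation
\[
\rho_E = \chi_1 \times [X_E] + [X_E] \times \chi_2
\]
for some $0$-cycle classes $\chi_1, \chi_2 \in \Ch_0(X_E)$ of degree $1$.

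Second, I would plug this into the first preceding lemma, which says exactly that such a decomposition of $\rho_E$ over some field extension $E/F$, together with the projective homogeneity of $X$ and the fact that $\rho$ is a projector, forces $\rho$ to be a Rost projector. All hypotheses of that lemma are in place, so the conclusion follows.

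There is no real obstacle here; the work has already been done in the two previous lemmas. The only thing worth double-checking is the compatibility of the conventions: both lemmas are stated with the same degree index $d = \dim X$, the same coefficient ring $\F_2$, and the same notion of $0$-cycle class of degree $1$, so the output of the second lemma feeds without modification into the hypothesis of the first. Hence the proof reduces to a single line and the \texttt{\textbackslash qed} already present in the statement is appropriate.
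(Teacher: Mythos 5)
Your argument is exactly the one the paper intends: the corollary carries a \qed precisely because it is the immediate splice of the two preceding lemmas, applying the ``iff'' characterization over $E$ (where $X_E$ is still smooth, complete and irreducible by geometric irreducibility) to get $\rho_E=\chi_1\times[X_E]+[X_E]\times\chi_2$, and then feeding this into the first lemma. So your proof is correct and coincides with the paper's approach.
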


A smooth complete variety is called {\em anisotropic}, if the degree of its
any closed point is even.

\begin{lemma}[{\cite[\Small Lemma 9.2]{markus}, cf. \cite[\Small proof of Lemma 6.2]{semenov}}]
\label{sol'}
Assume that $X$ is anisotropic and possesses a Rost correspondence $\rho$.
Then for any integer $i\ne d$ and any elements
$\alpha\in\Ch_i(X)$ and $\beta\in\Ch^i(X_{F(X)})$, the image of the product
$\alpha_{F(X)}\cdot\beta\in\Ch_0(X_{F(X)})$ under the degree
homomorphism $\deg_{X_{F(X)}/F(X)}:\Ch_0(X_{F(X)})\to\F_2$ is $0$.
\end{lemma}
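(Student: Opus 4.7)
The plan is to realise $\deg_{X_{F(X)}/F(X)}(\alpha_{F(X)}\cdot\beta)\in\F_2$ as the degree over $F$ of an $F$-rational $0$-cycle on $X\times X$, at which point anisotropy closes the argument: anisotropy of $X$ propagates to $X\times X$ because any closed point $y\in X\times X$ lies over a closed point $p_1(y)\in X$, whence $[k(y):F]=[k(y):k(p_1(y))]\cdot[k(p_1(y)):F]$ is even. The Rost correspondence $\rho$ is what will manufacture this $F$-rational $0$-cycle.

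First I would dispose of the boundary cases. If $i<0$ or $i>d$ one of $\Ch_i(X)$, $\Ch^i(X_{F(X)})$ is zero. If $i=0$, then $\beta\in\Ch^0(X_{F(X)})=\F_2\cdot[X_{F(X)}]$ and the degree in question reduces to $\deg_{X/F}\alpha$, which is $0$ by anisotropy. So assume $0<i<d$. Surjectivity of $\xi_X^*$ (from the localisation sequence, viewing $X_{F(X)}$ as the generic fibre of $p_1\colon X\times X\to X$) lets me lift $\beta$ to some $\tilde\beta\in\Ch^i(X\times X)$, and since $\alpha_{F(X)}=\xi_X^*(p_2^*\alpha)$ I have $\alpha_{F(X)}\cdot\beta=\xi_X^*\bigl((p_2^*\alpha)\cdot\tilde\beta\bigr)$. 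By \cite[Lemma~75.1]{EKM} its degree equals the multiplicity
$$
c\;:=\;\mult\bigl((p_2^*\alpha)\cdot\tilde\beta\bigr)\;\in\;\F_2,
$$
so the task becomes showing $c=0$.

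The heart of the argument is the $F$-rational $0$-cycle
$$
z\;:=\;\rho\cdot(p_2^*\alpha)\cdot\tilde\beta\;\in\;\Ch_0(X\times X)
$$
(codimensions $d+(d-i)+i=2d$), whose degree $\deg_F z$ I would compute in two ways. First, anisotropy of $X\times X$ gives $\deg_F z=0$ in $\F_2$. Second, degrees are preserved under base change to $F(X)$, and over $F(X)$ the Rost form $\rho_{F(X)}=\chi_1\times[X_{F(X)}]+[X_{F(X)}]\times\chi_2$ together with $\chi_2\cdot\alpha_{F(X)}\in\Ch_{i-d}(X_{F(X)})=0$ (which uses $i<d$) collapses the product to $\rho_{F(X)}\cdot p_2^*\alpha_{F(X)}=\chi_1\times\alpha_{F(X)}$, so $z_{F(X)}=(\chi_1\times\alpha_{F(X)})\cdot\tilde\beta_{F(X)}$. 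Pushing forward along $p_1$ via the projection formula, and using the identity $\tilde\beta_{F(X)}^*(\alpha_{F(X)})=c\cdot[X_{F(X)}]$ (the base change of $\tilde\beta^*(\alpha)=c[X]$, whose $[X]$-coefficient is by definition the multiplicity $c$), one obtains $(p_1)_*z_{F(X)}=c\,\chi_1$, of degree $c\cdot\deg\chi_1=c$. Comparing the two answers forces $c=0$.

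The hard part will be the second computation of $\deg_F z$: it rests on the specific Rost form of $\rho_{F(X)}$ and on the vanishing $\chi_2\cdot\alpha_{F(X)}=0$, which is precisely where the hypothesis $i\neq d$ is used, and which explains why the statement fails at $i=d$.
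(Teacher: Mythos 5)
Your proof is correct and takes essentially the same route as the paper's: you build the same $0$-cycle $\rho\cdot([X]\times\alpha)\cdot\gamma$ (your $z$, with $\gamma=\tilde\beta$ a lift of $\beta$), kill its degree by anisotropy of $X\times X$, and identify that degree with $\deg(\alpha_{F(X)}\cdot\beta)$ by passing to $F(X)$, using the Rost shape of $\rho$, the vanishing $\chi_2\cdot\alpha_{F(X)}\in\Ch_{i-d}(X_{F(X)})=0$ for $i\ne d$, the projection formula for $\pr_1$, and the multiplicity interpretation of \cite[Lemma 75.1]{EKM}. The only differences are cosmetic: you first reduce to the multiplicity $c$ and treat the case $i=0$ separately, which the common argument already covers.
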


\begin{proof}
Let $\gamma\in\Ch^i(X\times X)$ be a preimage of $\beta$ under the surjection
$$
\xi_X^*:\Ch^i(X\times X)\to\Ch^i(\Spec F(X)\times X)
$$
(where $\xi^*_X$ is as defined in the proof of Proposition \ref{prop}).
We consider the $0$-cycle class
$$
\delta=\rho\cdot([X]\times\alpha)\cdot\gamma\in\Ch_0(X\times X).
$$
Since $X$ is anisotropic, so is $X\times X$, and it follows that
$\deg_{(X\times X)/F}\delta=0$.
Therefore it suffices to show that
$\deg_{(X\times X)/F}\delta=\deg_{X_{F(X)}/F(X)}(\alpha_{F(X)}\cdot\beta)$.

We have $\deg_{(X\times X)/F}\delta=\deg_{(X\times X)_{F(X)}/F(X)}(\delta_{F(X)})$ and
\begin{multline*}
\delta_{F(X)}=
(\chi_1\times[X_{F(X)}]+[X_{F(X)}]\times\chi_2)\cdot([X_{F(X)}]\times\alpha_{F(X)})\cdot
\gamma_{F(X)}=\\
(\chi_1\times[X_{F(X)}])\cdot([X_{F(X)}]\times\alpha_{F(X)})\cdot\gamma_{F(X)}
\end{multline*}
(because $i\ne d$)
where $\chi_1,\chi_2\in\Ch_0(X_{F(X)})$ are as in Definition \ref{def rost corr}.
For the first projection $\pr_1:X_{F(X)}\times X_{F(X)}\to X_{F(X)}$ we have
$$
\deg_{(X\times X)_{F(X)}/F(X)}\delta_{F(X)}=
\deg_{X_{F(X)}/F(X)}(\pr_1)_*(\delta_{F(X)})
$$
and by the projection formula
$$
(\pr_1)_*(\delta_{F(X)})=
\chi_1\cdot(\pr_1)_*\big(([X_{F(X)}]\times\alpha_{F(X)})\cdot\gamma_{F(X)}\big).
$$
Finally,
$$
(\pr_1)_*\big(([X_{F(X)}]\times\alpha_{F(X)})\cdot\gamma_{F(X)}\big)=
\mult\big(([X_{F(X)}]\times\alpha_{F(X)})\cdot\gamma_{F(X)}\big)\cdot[X_{F(X)}]
$$
and
$$
\mult\big(([X_{F(X)}]\times\alpha_{F(X)})\cdot\gamma_{F(X)}\big)=
\mult\big(([X]\times\alpha)\cdot\gamma\big).
$$
Since
$\mult\chi=\deg_{X_{F(X)}/F(X)}\xi_X^*(\chi)$
for any element $\chi\in\Ch_d(X\times X)$ by \cite[\Small Lemma 75.1]{EKM},
it follows that
\begin{equation*}
\mult\big(([X]\times\alpha)\cdot\gamma\big)=\deg(\alpha_{F(X)}
\cdot\beta).\qedhere
\end{equation*}
\end{proof}

For anisotropic $X$,
we consider the homomorphism
$\deg\!/2:\Ch_0(X)\to\F_2$ induced by the homomorphism
$\CH_0(X)\to\Z$, $\alpha\mapsto\deg(\alpha)/2$.

\begin{cor}
\label{cor-rost1}
Assume that $X$ is anisotropic and possesses a Rost correspondence.
Then for any integer $i\ne d$ and any elements
$\alpha\in\Ch_i(X)$ and $\beta\in\Ch^i(X)$ with $\beta_{F(X)}=0$ one has
$(\deg\!/2)(\alpha\cdot\beta)=0$.
\end{cor}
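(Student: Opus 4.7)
The plan is to reduce Corollary~\ref{cor-rost1} to Lemma~\ref{sol'} by lifting to the integral Chow groups and dividing an even degree by $2$. Choose integral lifts $\tilde{\alpha}\in\CH_i(X)$ and $\tilde{\beta}\in\CH^i(X)$ of $\alpha$ and $\beta$. Since $\beta_{F(X)}=0$ in $\Ch^i(X_{F(X)})$, the integral class $\tilde{\beta}_{F(X)}$ lies in $2\CH^i(X_{F(X)})$, so I may write $\tilde{\beta}_{F(X)}=2\tilde{\beta}'$ for some $\tilde{\beta}'\in\CH^i(X_{F(X)})$. Let $\beta'\in\Ch^i(X_{F(X)})$ denote its mod-$2$ reduction.

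Next I would exploit the fact that the degree of a $0$-cycle is preserved under the flat base change $F\subset F(X)$. This yields
\[
\deg_{X/F}(\tilde{\alpha}\cdot\tilde{\beta})
=\deg_{X_{F(X)}/F(X)}(\tilde{\alpha}_{F(X)}\cdot\tilde{\beta}_{F(X)})
=2\,\deg_{X_{F(X)}/F(X)}(\tilde{\alpha}_{F(X)}\cdot\tilde{\beta}').
\]
Dividing by $2$ and then reducing modulo $2$ gives
\[
(\deg\!/2)(\alpha\cdot\beta)
\equiv \deg_{X_{F(X)}/F(X)}(\alpha_{F(X)}\cdot\beta')\pmod{2}.
\]
Since $i\ne d$ and $X$ carries a Rost correspondence, Lemma~\ref{sol'} applied to $\alpha\in\Ch_i(X)$ and $\beta'\in\Ch^i(X_{F(X)})$ says exactly that the right hand side vanishes in $\F_2$, which finishes the argument.

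The only real subtleties are bookkeeping: one has to check that the homomorphism $\deg\!/2$ is well defined on $\Ch_0(X)$ (which uses that every closed point of the anisotropic $X$ has even degree, so that $\deg/2$ of a $2$-divisible integral class is even), and that the equality of degrees before and after base change to $F(X)$ is the standard flat base change property for $\deg_0$. I expect no serious obstacle: the argument is essentially a translation of Lemma~\ref{sol'} from $\Ch^{\ast}(X_{F(X)})$ back to $\Ch^{\ast}(X)$ using the short exact sequence $0\to 2\CH\to\CH\to\Ch\to 0$ combined with $\tilde{\beta}_{F(X)}\in 2\CH$.
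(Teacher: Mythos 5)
Your argument is correct and is essentially identical to the paper's proof: both take an integral representative of $\beta$, write its restriction over $F(X)$ as twice a class $\beta''$, and then identify $(\deg\!/2)(\alpha\cdot\beta)$ with $\deg_{X_{F(X)}/F(X)}\big(\alpha_{F(X)}\cdot(\beta''\bmod 2)\big)$, which vanishes by Lemma~\ref{sol'}. The bookkeeping points you flag (well-definedness of $\deg\!/2$ on the anisotropic $X$ and invariance of the degree under the base change to $F(X)$) are exactly the implicit steps in the paper's shorter write-up.
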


\begin{proof}
Let $\beta'\in\CH^i(X)$ be an integral representative of $\beta$.
Since $\beta_{F(X)}=0$, we have $\beta'_{F(X)}=2\beta''$ for some
$\beta''\in\CH^i(X_{F(X)})$.
Therefore
$$
(\deg\!/2)(\alpha\cdot\beta)=\deg_{X_{F(X)}/F(X)}\big(\alpha_{F(X)}
\cdot(\beta''\mod2)\big)=0
$$
by Lemma \ref{sol'}.
\end{proof}

\begin{cor}
\label{addition}
Assume that $X$ is anisotropic and possesses a Rost correspondence $\rho$.
For any integer $i\not\in\{0,d\}$ and any $\alpha\in\Ch_i(X)$ and $\beta\in\Ch^i(X)$
one has $$(\deg\!/2)\big((\alpha\times\beta)\cdot\rho\big)=0.$$
\end{cor}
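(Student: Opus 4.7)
The strategy is to reduce the statement to Corollary \ref{cor-rost1} by pushing the $0$-cycle $(\alpha\times\beta)\cdot\rho$ down along the first projection $\pr_1\colon X\times X\to X$. Since the integral degree $\deg\colon\CH_0(X\times X)\to\Z$ factors through $(\pr_1)_*$, so does its half (all lifts involved have even degree thanks to anisotropy of $X\times X$); this means I can replace $(\alpha\times\beta)\cdot\rho$ by its pushforward before applying $\deg\!/2$.

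Writing $\alpha\times\beta=\pr_1^*(\alpha)\cdot\pr_2^*(\beta)$ and invoking the projection formula, the pushforward takes the form
$$
(\pr_1)_*\bigl((\alpha\times\beta)\cdot\rho\bigr)=\alpha\cdot\beta',\qquad\text{where }\;\beta':=(\pr_1)_*\bigl(([X]\times\beta)\cdot\rho\bigr)\in\Ch^i(X).
$$
Thus it suffices to prove $(\deg\!/2)(\alpha\cdot\beta')=0$, and by Corollary \ref{cor-rost1} (whose hypothesis $i\neq d$ is part of what is being assumed) it is enough to show that $\beta'_{F(X)}=0$.

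To verify this vanishing, I would expand, using Definition \ref{def rost corr},
$$
\bigl([X_{F(X)}]\times\beta_{F(X)}\bigr)\cdot\rho_{F(X)}=\chi_1\times\beta_{F(X)}+[X_{F(X)}]\times(\beta_{F(X)}\cdot\chi_2),
$$
and push forward by $\pr_1$. The first summand pushes to zero because $i\neq d$ forces $\beta_{F(X)}$ to have positive dimension, so $\pr_1$ is not generically finite on the support of $\chi_1\times\beta_{F(X)}$; the second summand pushes to zero because $i\neq 0$ places $\beta_{F(X)}\cdot\chi_2$ in $\Ch^{i+d}(X_{F(X)})=0$. Both restrictions $i\notin\{0,d\}$ are used precisely in this dimension-count step, which is the only real subtlety; the remainder is a routine application of the projection formula followed by Corollary \ref{cor-rost1}.
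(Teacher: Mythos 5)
Your argument is correct, and it takes a mildly but genuinely different route from the paper's. The paper passes to integral representatives of $\alpha$, $\beta$ and $\rho$, restricts to $F(X)$, writes the integral lift of $\rho$ there as $\chi'_1\times[X_{F(X)}]+[X_{F(X)}]\times\chi'_2+2\gamma$, kills the two $\chi$-terms against $\alpha\times\beta$ by exactly the dimension count you perform (this is where $i\notin\{0,d\}$ enters), and then applies Lemma \ref{sol'} directly to the class $(\pr_1)_*\big(([X_{F(X)}]\times\beta_{F(X)})\cdot\gamma\big)$, which exists only over $F(X)$. You instead stay with $\F_2$-coefficients over $F$, perform the $\pr_1$-pushforward first to get $\beta'=(\pr_1)_*\big(([X]\times\beta)\cdot\rho\big)\in\Ch^i(X)$ defined over the base field, verify $\beta'_{F(X)}=0$ from Definition \ref{def rost corr} (your two vanishing reasons are precisely the paper's use of $i\ne d$ and $i\ne0$), and then quote Corollary \ref{cor-rost1}, which already encapsulates the passage to an integral lift divisible by $2$ and the appeal to Lemma \ref{sol'}; notably, the paper itself does not use Corollary \ref{cor-rost1} in its proof of this statement. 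What your route buys is that no integral representative of $\rho$ is ever manipulated; the only extra point you need, and correctly note, is the compatibility $(\deg\!/2)_{X\times X}=(\deg\!/2)_X\compose(\pr_1)_*$ on $\Ch_0(X\times X)$, which is legitimate because $X\times X$ is anisotropic together with $X$, so $\deg\!/2$ is well defined there and is computed on compatible integral lifts. Both proofs ultimately rest on the same ingredients: the projection formula along $\pr_1$ and Lemma \ref{sol'}.
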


\begin{proof}
Let $\alpha'\in\CH_i(X)$ and $\beta'\in\CH^i(X)$ be integral representatives of $\alpha$ and $\beta$.
Let $\rho'\in\CH_d(X\times X)$ be an integral representative of $\rho$.
It suffices to show that  the degree of the $0$-cycle class $(\alpha'\times\beta')\cdot\rho'\in\CH_0(X\times X)$ is divisible by $4$.

Let $\chi_1$ and $\chi_2$ be as in Definition \ref{def rost corr}.
Let $\chi'_1,\chi'_2\in\CH_0(X_{F(X)})$ be integral representatives of $\chi_1$
and $\chi_2$.
Then $\rho'_{F(X)}=\chi'_1\times [X_{F(X)}]+[X_{F(X)}]\times\chi'_2 +2\gamma$
for some $\gamma\in\CH_d(X_{F(X)}\times X_{F(X)})$.
Therefore (since $i\not\in\{0,d\}$)
$$
(\alpha'_{F(X)}\times\beta'_{F(X)})\cdot\rho'_{F(X)}=
2(\alpha'_{F(X)}\times\beta'_{F(X)})\cdot\gamma.
$$
Applying the projection $\pr_1$ onto the first factor and the projection formula,
we get twice the element
$\alpha'_{F(X)}\cdot(\pr_1)_*\big(([X_{F(X)}]\times\beta'_{F(X)})\cdot\gamma\big)$
whose degree is even by Lemma \ref{sol'}
(here we use once again the condition that $i\ne d$).
\end{proof}

\begin{lemma}
\label{rho kvadrat}
Assume that $X$ is anisotropic and possesses a Rost correspondence $\rho$.
Then $(\deg\!/2)(\rho^2)=1$.
\end{lemma}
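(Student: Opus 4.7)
The plan is to show that any integral lift $\rho' \in \CH_d(X \times X)$ of $\rho$ satisfies $\deg((\rho')^2) \equiv 2 \pmod 4$; dividing by $2$ then yields $(\deg/2)(\rho^2) = 1$ in $\F_2$. Observe first that $X \times X$ is itself anisotropic, so that $\deg/2$ is well-defined on $\Ch_0(X \times X)$: the first projection sends a closed point of $X \times X$ to a closed point of $X$, whose residue field embeds in that of the original point, forcing the latter to have even degree.

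Since the degree of a $0$-cycle is preserved under base change, I compute after passing to $F(X)$. Definition \ref{def rost corr} provides integral $0$-cycle classes $\chi_1', \chi_2' \in \CH_0(X_{F(X)})$ of odd degree lifting $\chi_1$ and $\chi_2$, together with an element $\gamma \in \CH_d((X \times X)_{F(X)})$, such that
$$
\rho'_{F(X)} \;=\; \chi_1' \times [X_{F(X)}] + [X_{F(X)}] \times \chi_2' + 2\gamma.
$$
Expanding the square via $(a \times b) \cdot (c \times d) = (a \cdot c) \times (b \cdot d)$, the terms $(\chi_i' \times [X_{F(X)}])^2$ and $([X_{F(X)}] \times \chi_i')^2$ vanish, because $\chi_i' \cdot \chi_i' \in \CH_{-d}(X_{F(X)}) = 0$ (this is the one place where positive dimension $d > 0$ is used). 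The unique contribution not already divisible by $4$ is
$$
2 \, (\chi_1' \times [X_{F(X)}]) \cdot ([X_{F(X)}] \times \chi_2') \;=\; 2 \, \chi_1' \times \chi_2',
$$
of degree $2 \deg(\chi_1') \deg(\chi_2')$, an odd multiple of $2$.

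Summing everything, $\deg((\rho'_{F(X)})^2) \equiv 2 \pmod 4$, hence the same congruence holds for $\deg((\rho')^2)$, and $(\deg/2)(\rho^2) = 1$. The only subtlety is the decision to work with an \emph{integral} lift rather than with the class $\rho$ itself: this is what exposes the factor of $2$ that $\deg/2$ is going to strip away. Once this move is made, any ambiguity in the choice of $\rho'$, $\chi_i'$ and $\gamma$ only affects summands that are already divisible by $4$, so it cannot alter the final parity; there is no serious obstacle beyond keeping careful track of the powers of $2$.
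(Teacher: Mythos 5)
Your proof is correct and follows essentially the same route as the paper: pass to an integral representative $\rho'$, note that modulo $4$ the square of $\rho'_{F(X)}$ equals $(\chi'_1\times[X_{F(X)}]+[X_{F(X)}]\times\chi'_2)^2=2(\chi'_1\times\chi'_2)$, whose degree is twice an odd number. The extra details you supply (anisotropy of $X\times X$ for the well-definedness of $\deg\!/2$, the dimension reason $d>0$ for the vanishing of the squared terms, independence of choices) are implicit in the paper's argument and are handled correctly.
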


\begin{proof}
Let $\chi_1$ and $\chi_2$ be as in Definition \ref{def rost corr}.
Let $\chi'_1,\chi'_2\in\CH_0(X_E)$ be integral representatives of $\chi_1$
and $\chi_2$.
The degrees of $\chi'_1$ and $\chi'_2$ are odd.
Therefore, the degree of the cycle class
$$
(\chi'_1\times[X_{F(X)}]+[X_{F(X)}]\times\chi'_2)^2=
2(\chi'_1\times\chi'_2)\in\CH_0(X_{F(X)}\times X_{F(X)})
$$
is not divisible by $4$.

Let $\rho'\in\CH_d(X\times X)$ be an integral representative of $\rho$.
Since $\rho'_{F(X)}$ is $\chi'_1\times [X_{F(X)}]+[X_{F(X)}]\times\chi'_2$ modulo $2$,
$(\rho'_{F(X)})^2$ is $(\chi'_1\times [X_{F(X)}]+[X_{F(X)}]\times\chi'_2)^2$ modulo $4$.
Therefore $(\deg\!/2)(\rho^2)=1$.
\end{proof}


\begin{thm}
[{\cite[\Small Theorem 9.1]{markus}, see also \cite[\Small proof of Lemma 6.2]{semenov}}]
\label{markus1}
Let $X$ be an anisotropic smooth complete geometrically
irreducible variety of a positive dimension $d$ over a field $F$ of characteristic $\ne2$ possessing a Rost correspondence.
Then the degree of the highest Chern class $c_d(-T_X)$, where $T_X$ is
the tangent bundle on $X$, is not divisible by $4$.
\end{thm}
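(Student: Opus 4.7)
The plan is to establish $(\deg\!/2)(c_d(-T_X)) = 1$ in $\F_2$. Since $X$ is anisotropic, $\deg c_d(-T_X)$ is automatically even, so this claim is equivalent to $\deg c_d(-T_X) \equiv 2 \pmod 4$, i.e., not divisible by $4$. Combined with Lemma~\ref{rho kvadrat}, which gives $(\deg\!/2)(\rho^2) = 1$, it suffices to prove the equality
$$(\deg\!/2)(c_d(-T_X)) = (\deg\!/2)(\rho^2).$$

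The central tool is the self-intersection formula for the diagonal embedding $\Delta\colon X \hookrightarrow X\times X$ (whose normal bundle is $T_X$) together with the mod-$2$ Steenrod operations of Brosnan on Chow groups. On the one hand, the instability axiom combined with the self-intersection formula gives $S^d([\Delta_X]) = [\Delta_X]^2 = \Delta_*(c_d(T_X))$. On the other hand, the Riemann-Roch-Steenrod formula applied to $\Delta_*(1_X)$ yields $S^*([\Delta_X]) = \Delta_*(c(-T_X))$, whose top-degree component is $\Delta_*(c_d(-T_X))$. Combining these, one recovers the classical mod-$2$ identity $c_d(T_X) = c_d(-T_X)$ in $\Ch_0(X)$. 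Instability also gives $S^d(\rho) = \rho^2$.

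To sharpen this to a mod-$4$ comparison, I would fix an integral lift $\rho' \in \CH_d(X\times X)$ and exploit the form $\rho'_{F(X)} = \chi_1' \times [X] + [X] \times \chi_2' + 2\gamma$ with $\deg \chi_i'$ odd. Over $F(X)$, the class $[\Delta_X]_{F(X)}$ admits a Künneth-type decomposition whose ``outer'' parts match the outer terms of $\rho'_{F(X)}$, while the ``middle'' parts are sums of external products $\alpha\times\beta$ of codimensions strictly between $0$ and $d$. Expanding $(\rho' - [\Delta_X])^2$ in $\CH_0(X\times X)$ modulo $4$ and applying Corollary~\ref{addition} to kill the middle Künneth terms after intersecting with $\rho$, together with Corollary~\ref{cor-rost1} to dispose of cycles that vanish generically, one aims to show that $\deg \rho'^2 - \deg[\Delta_X]^2$ is divisible by $4$, giving the desired equality of $\F_2$-invariants.

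The main obstacle is precisely the passage from the mod-$2$ identity $c_d(T_X) \equiv c_d(-T_X)$ to a mod-$4$ congruence of their degrees. This requires carefully identifying the middle contributions in an integral lift of $\rho$ (controlled by Corollary~\ref{addition}) and handling the lower Chern class corrections arising from the polynomial expansion of $c_d(T_X) - c_d(-T_X)$ in $c_1(T_X),\dots,c_{d-1}(T_X)$, which must be eliminated via Corollary~\ref{cor-rost1} after exhibiting the relevant cycles as vanishing (or becoming trivial) over $F(X)$; the Rost correspondence is what makes these vanishing criteria available.
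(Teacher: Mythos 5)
Your overall target --- reducing the statement to $(\deg\!/2)(c_d(-T_X))=(\deg\!/2)(\rho^2)=1$ via Steenrod operations, Lemma \ref{rho kvadrat}, and Corollaries \ref{cor-rost1} and \ref{addition} --- is the right one, but the route you sketch through the diagonal has genuine gaps. First, the ``classical mod-$2$ identity $c_d(T_X)=c_d(-T_X)$ in $\Ch_0(X)$'' that you claim to recover is simply false in general (already for $X=\PP^2$ one has $c_2(T_X)=3h^2$ and $c_2(-T_X)=6h^2$, which differ mod $2$); what is true is only the relation $\Delta_*(c_d(-T_X))=\sum_i c_{d-i}(-T_{X\times X})\cdot S^i([\Delta_X])$ coming from $\Steen_\bullet=c_\bullet(-T_{X\times X})\cdot S^\bullet$, whose $i=d$ term is $[\Delta_X]^2=\Delta_*(c_d(T_X))$ plus correction terms in lower Chern classes. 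Second, your plan to compare $\deg(\rho'^{\,2})$ with $\deg([\Delta_X]^2)$ modulo $4$ relies on a K\"unneth-type decomposition of $[\Delta_X]_{F(X)}$ into external products; nothing in the hypotheses provides this ($X$ is not assumed to split over $F(X)$ --- the Rost correspondence only controls $\rho_{F(X)}$, not the diagonal), and Corollaries \ref{addition} and \ref{cor-rost1} apply to cycles defined over $F$, so they cannot be invoked against generic-fiber components of the diagonal. Third, the correction terms you defer to the end cannot be killed by Corollary \ref{cor-rost1}: that corollary requires $\beta_{F(X)}=0$, and Chern classes of $T_X$ (or of $T_{X\times X}$) have no reason to vanish over $F(X)$. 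So the ``main obstacle'' you name is not a technicality but precisely the unproved core of the argument.

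The paper's proof avoids the diagonal altogether and applies the \emph{homological} operation directly to $\rho$: from the commutativity of $\Steen_d$ with the proper push-forward $(\pr_1)_*$, from $(\pr_1)_*(\rho)=[X]$ and from $\Steen_d^X([X])=c_d(-T_X)$, it suffices to show $(\deg\!/2)\big(\Steen_d^{X\times X}(\rho)\big)=1$. Expanding $\Steen_d^{X\times X}(\rho)=\sum_{i=0}^d c_{d-i}(-T_{X\times X})\cdot S^i(\rho)$, the $i=d$ term is $\rho^2$, handled by Lemma \ref{rho kvadrat}; in the $i=0$ term the two extreme K\"unneth factors contribute $(\deg\!/2)(c_d(-T_X))$ twice and cancel, while the middle ones die by Corollary \ref{addition}; and for $0<i<d$ one uses the projection formula and Corollary \ref{cor-rost1}, which is applicable exactly because $S^i(\rho)_{F(X)}=0$ for $i>0$ (positive Steenrod operations kill $\chi\times[X_{F(X)}]+[X_{F(X)}]\times\chi'$). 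This vanishing-over-$F(X)$ mechanism is what your correction terms lack; without a substitute for it, your argument does not close.
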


\begin{proof}
In this proof, we write $c_\bullet(-T_X)$ for the total Chern class
$\in\Ch(X)$ in the Chow group with coefficient in $\F_2$.
It suffices to show that $(\deg\!/2)(c_d(-T_X))=1$.

Let $\Steen_\bullet^X:\Ch(X)\to\Ch(X)$ be the modulo $2$ homological Steenrod operation,
\cite[\Small \S59]{EKM}.
We have a commutative diagram
\begin{equation*}
\label{nk3:vot tak diagramka}
\begin{diagram}
\node{}\node{\Ch_d(X\times X)}\arrow{sw,t}{(\pr_1)_*}
\arrow[2]{s,r}{\Steen_d^{X\times X}}\node{}\\
\node{\Ch_d(X)}\arrow[2]{s,l}{\Steen_d^X}\node{}
\node{}
\\
\node{}\node{\Ch_0(X\times X)}\arrow{sw,t}{(\pr_1)_*}
\arrow[2]{s,r}{\deg\!/2}
\arrow{se,l}{(\pr_2)_*}
\node{}\\
\node{\Ch_0(X)}\arrow{se,b}{\deg\!/2}\node{}
\node{\Ch_0(X)}
\arrow{sw,r}{\deg\!/2}
\\
\node{}\node{\F_2}\node{}
\end{diagram}
\end{equation*}
Since $(\pr_1)_*(\rho)=[X]$ and $\Steen_d^X([X])=c_d(-T_X)$
\cite[\Small formula (60.1)]{EKM},
it suffices to show that
$$
(\deg\!/2)\big(\Steen^{X\times X}_d(\rho)\big)=1.
$$

We have $\Steen_\bullet^{X\times X}=c_\bullet(-T_{X\times X})\cdot\Steen^\bullet_{X\times X}$, where $\Steen^\bullet$ is the cohomological Steenrod operation, \cite[\Small \S61]{EKM}.
Therefore
$$
\Steen^{X\times X}_d(\rho)=\sum_{i=0}^dc_{d-i}(-T_{X\times X})\cdot\Steen^i_{X\times X}(\rho).
$$

The summand with $i=d$ is $\Steen^d_{X\times X}(\rho)=\rho^2$ by
\cite[\Small Theorem 61.13]{EKM}.
By Lemma \ref{rho kvadrat}, its image under $\deg\!/2$ is $1$.

Since $c_\bullet(-T_{X\times X})=c_\bullet(-T_X)\times c_\bullet(-T_X)$
and $\Steen^0=\id$,
the summand with $i=0$ is
$$
\left(\sum_{j=0}^d c_j(-T_X)\times c_{d-j}(-T_X)\right)\cdot\rho.
$$
Its image under $\deg\!/2$ is $0$ because
\begin{multline*}
(\deg\!/2)\Big(\big(c_0(-T_X)\times c_d(-T_X)\big)\cdot\rho\Big)
=(\deg\!/2)(c_d(-T_X))=
\\
(\deg\!/2)\Big(\big(c_d(-T_X)\times c_0(-T_X)\big)\cdot\rho\Big)
\end{multline*}
while
for $j\not\in\{0,d\}$, we have
$(\deg\!/2)\Big(\big(c_j(-T_X)\times c_{d-j}(-T_X)\big)\cdot\rho\Big)=0$
by Corollary \ref{addition}.

Finally, for any $i$ with $0<i<d$
the $i$th summand is the sum
$$
\sum_{j=0}^{d-i}\big(c_j(-T_X)\times c_{d-i-j}(-T_X)\big)\cdot\Steen^i_{X\times X}(\rho).
$$
We shall show that for any $j$ the image of the $j$th summand under $\deg\!/2$ is $0$.
Note that the image under $\deg\!/2$ coincides with the image under the composition
$(\deg\!/2)\compose(\pr_1)_*$ and also under the composition $(\deg\!/2)\compose(\pr_2)_*$
(look at the above commutative diagram).
By the projection formula we have
\begin{multline*}
(\pr_1)_*\Big(\big(c_j(-T_X)\times c_{d-i-j}(-T_X)\big)\cdot\Steen^i_{X\times X}(\rho)\Big)=\\c_j(-T_X)\cdot(\pr_1)_*\Big(\big([X]\times c_{d-i-j}(-T_X)\big)\cdot\Steen^i_{X\times X}(\rho)\Big)
\end{multline*}
and the image under $\deg\!/2$ is $0$ for positive $j$ by
Corollary \ref{cor-rost1} applied to $\alpha=c_j(-T_X)$ and
$\beta=(\pr_1)_*\Big(\big([X]\times c_{d-i-j}(-T_X)\big)\cdot\Steen^i_{X\times X}(\rho)\Big)$.
Corollary \ref{cor-rost1} can be indeed applied, because since $\rho_{F(X)}=\chi_1\times[X_{F(X)}]+[X_{F(X)}]\times\chi_2$ and $i>0$,
we have $\Steen^i_{(X\times X)_{F(X)}}(\rho)_{F(X)}=0$ and therefore
$\beta_{F(X)}=0$.

For $j=0$ we use the projection formula for $\pr_2$ and Corollary \ref{cor-rost1} with
$\alpha=c_{d-i}(-T_X)$ and $\beta=(\pr_2)_*\big(\Steen^i_{X\times X}(\rho)\big)$.
\end{proof}

\begin{rem}
\label{char2}
The reason of the characteristic exclusion in Theorem \ref{markus1} is that
its proof makes use of Steenrod operations on Chow groups with coefficients
in  $\F_2$ which are not available in characteristic $2$.
\end{rem}

We would like to mention

\begin{lemma}[{\cite[\Small Lemma 9.10]{markus}}]
\label{markus2}
Let $X$ be an anisotropic smooth complete equidimensional variety over a field of arbitrary characteristic.
If $\dim X+1$ is not a power of $2$, then the degree of the integral $0$-cycle class $c_{\dim X}(-T_X)\in\CH_0(X)$ is divisible by $4$.
\end{lemma}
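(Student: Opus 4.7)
My plan is to combine the anisotropy hypothesis, which forces every $0$-cycle class on $X$ to have even degree, with the integrality of Hirzebruch--Riemann--Roch characteristic numbers to extract the required second factor of $2$.

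Step~1 (formal expansion). From the identity $c(-T_X)=c(T_X)^{-1}$ in the integral Chow ring one obtains $c_d(-T_X)=(-1)^d h_d(T_X)$, where $h_d$ denotes the complete homogeneous symmetric polynomial of degree $d$ in the Chern roots. Newton's identity
$$h_d = c_1 h_{d-1} - c_2 h_{d-2} + \cdots + (-1)^{d-1} c_d$$
then expresses $c_d(-T_X)$, by induction on $d$, as a universal $\Z$-polynomial in $c_1(T_X),\dots,c_d(T_X)$. Each monomial $c_{i_1}(T_X)\cdots c_{i_k}(T_X)$ with $i_1+\cdots+i_k=d$ lies in $\CH_0(X)$ and so has even degree by anisotropy; hence $\deg c_d(-T_X)\in 2\Z$ already, and the content of the lemma is that $\tfrac12\deg c_d(-T_X)$ is even whenever $d+1$ is not a power of~$2$.

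Step~2 (HRR integrality produces the second factor of $2$). The integrality of $\chi(X,\cO_X)=\int_X\Td(T_X)$ and of its line-bundle twists $\chi(X,L)$ gives non-trivial $\Z$-linear relations among the halved Chern numbers $\tfrac12\deg(c_{i_1}\cdots c_{i_k})$. The model case $d=2$ is handled cleanly by Noether's formula $12\chi(X,\cO_X)=\deg c_1^2+\deg c_2$, which gives
$$\deg c_2(-T_X)=\deg c_1^2-\deg c_2=12\chi(X,\cO_X)-2\deg c_2 \;\in\; 4\Z,$$
since $\deg c_2=\chi(X)$ is itself even by anisotropy. In higher dimension the same philosophy should compress $\deg c_d(-T_X)$ modulo $4\Z$ to a single universal integer multiple of~$\chi(X)$, with the remainder divisible by~$4$ via the higher Todd identities; the desired divisibility then follows from $2\mid\chi(X)$ together with evenness of the universal coefficient.

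The main obstacle is the resulting $2$-adic number-theoretic statement: that the universal coefficient of $\chi(X)$ in this HRR-normalised expansion of $c_d(-T_X)$ is even exactly when $d+1$ is not a power of~$2$. This is a combinatorial congruence on the Todd-class coefficients and on the coefficients of the Newton expansion of $h_d$; the most transparent route is probably to evaluate the universal identity on a split-bundle test case (or to appeal to the classical Milnor-type parity characterisation of the $s_d$-characteristic number, and to the Kummer--Lucas congruences controlling the $2$-adic valuation of the relevant binomial coefficients). Once this universal congruence is established, the three steps combine to yield $4\mid\deg c_d(-T_X)$.
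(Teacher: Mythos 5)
There is a genuine gap, and it sits exactly where the whole content of the lemma lies. First, note that the paper itself does not prove this statement: it is quoted verbatim from Rost's preprint (\cite[Lemma 9.10]{markus}), and at the end of \S\ref{Proof of Theorem} the author even explains how to bypass it in the application by a direct divisibility argument for the specific variety $Y'$. So your attempt has to be judged as a self-contained proof, and it is not one. Your Step~1 is correct but trivial: anisotropy makes every characteristic number even, so $\deg c_d(-T_X)\in2\Z$; the lemma is entirely about the second factor of $2$ and about why the hypothesis ``$d+1$ is not a power of $2$'' forces it. That hypothesis enters your argument only in the final paragraph, where you yourself label the decisive universal congruence as ``the main obstacle'' and suggest it ``should'' follow from Todd-class combinatorics, a split-bundle evaluation, or Milnor/Kummer--Lucas type facts. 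Nothing there is carried out, so the proof stops precisely at the point where the real work begins.

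Moreover, the proposed route is doubtful as stated. The $d=2$ case works because Noether's formula happens to express $\deg(c_1^2+c_2)$ as $12\chi(\cO_X)$, but the assertion that in general dimension HRR integrality (for $\cO_X$ and line-bundle twists) ``compresses'' $\deg c_d(-T_X)$ modulo $4$ to a universal multiple of a single characteristic number is unsubstantiated and is not a formal consequence of Todd-genus integrality; the full integrality constraints on Chern numbers are of Stong--Hattori type (all $K$-theoretic characteristic numbers), and even granting them one still needs the genuinely $2$-adic statement that the Segre number $\deg c_d(-T_X)$ is, modulo $4$, a universal integral combination of \emph{twice} characteristic numbers plus terms universally divisible by $4$ exactly when $d+1$ is not a $2$-power. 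That statement is essentially Milnor's analysis of indecomposables in cobordism ($s_d$ of a generator is $\pm p$ when $d=p^k-1$ and $\pm1$ otherwise), and transporting it from $s_d$ to $h_d=\pm c_d(-T_X)$ modulo $4$ is delicate because the decomposable characteristic numbers occurring in Newton's identities are only known to be even, not divisible by $4$, on an anisotropic variety. There is also a minor point that you invoke HRR for a variety that is only assumed smooth and complete, over a field of arbitrary characteristic. In short: the reduction you do prove is the easy half, and the half that uses the hypothesis on $\dim X+1$ is left as a conjecture; to complete the argument you would need to either carry out the Milnor-type $2$-adic computation in the ring of symmetric functions (with control of decomposables modulo $4$), or follow Rost's own proof, or, as the paper does for its application, avoid the lemma altogether.
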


\section
{Motivic decompositions of some isotropic varieties}
\label{Motivic decompositions of some isotropic varieties}

The coefficient ring $\Lambda$ is
$\F_2$ in this section.
Throughout this section, $D$ is a central division
$F$-algebra of degree $2^r$ with some positive integer $r$.

We say that motives $M$ and $N$ are {\em quasi-isomorphic} and write
$M\approx N$, if there exist
decompositions $M\simeq M_1\oplus\dots\oplus M_m$ and
$N\simeq N_1\oplus\dots\oplus N_n$ such that
$$
M_1(i_1)\oplus\dots\oplus M_m(i_m)\simeq
N_1(j_1)\oplus\dots\oplus N_n(j_n)
$$
for some (shift) integers $i_1,\dots,i_m$
and $j_1,\dots,j_n$.

We shall use the following

\begin{thm}[{\cite[\Small Theorems 3.8 and 4.1]{outmot}}]
\label{cd(gSB)}
For any  integer $l=0,1,\dots,r$,
the upper indecomposable summand $M_l$ of the motive of the generalized
Severi-Brauer variety $X(2^l;D)$ is lower.
Besides of this, the motive of
any finite direct product of any generalized Severi-Brauer varieties of $D$
is quasi-isomorphic to a finite sum of $M_l$ (with various
$l$).
\end{thm}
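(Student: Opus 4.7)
The plan is to apply the Krull-Schmidt principle from Corollary \ref{krull-schmidt}: the generalized Severi-Brauer varieties $X(i;D)$ are projective homogeneous, hence geometrically split and satisfy the nilpotence principle, so any motive built from them decomposes uniquely into indecomposables. By Remark \ref{outer unique} each has a unique upper indecomposable summand (and, dually, a unique lower one), and we define $M_l$ to be the upper indecomposable summand of $M(X(2^l;D))$.

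To prove $M_l$ is also lower, I would combine transposition of correspondences with Lemma \ref{tuda-suda}. If $p$ is the projector cutting out $M_l$, then $p^t$ is a projector cutting out a lower indecomposable summand $(X(2^l;D),p^t)$ of the same motive. My aim would be to exhibit multiplicity-one correspondences in both directions between $(X(2^l;D),p)$ and a Tate shift of $(X(2^l;D),p^t)$, constructed from the Poincar\'e pairing on the tautological flag; the key point is that $i=2^l$ has maximal $2$-divisibility compatible with $\ind D = 2^r$, which is what makes this pairing produce a multiplicity-one correspondence modulo $2$. Applying the second statement of Lemma \ref{tuda-suda} then forces the upper indecomposable summands of $(X(2^l;D),p)$ and $(X(2^l;D),p^t)$ to be isomorphic; by uniqueness $M_l$ is simultaneously upper and lower.

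For the second assertion, let $Y = \prod_j X(i_j;D)$. By Krull-Schmidt, $M(Y)$ decomposes uniquely into indecomposable summands. I would argue by induction on $\dim Y$ that each such summand is isomorphic to a Tate shift of some $M_l$. For a single factor $X(i;D)$, partial flag varieties (like those parameterizing pairs of $D$-submodules of reduced dimensions $2^l$ and $i$) furnish correspondences of multiplicity one between $X(i;D)$ and $X(2^l;D)$ for the value of $l$ controlled by the $2$-adic combinatorics of $i$ relative to $2^r = \deg D$; Lemma \ref{tuda-suda} then identifies the upper indecomposable summand of $M(X(i;D))$ with $M_l$, and the remaining summand of $M(X(i;D))$ is of strictly smaller dimension, to which the induction applies. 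For products, one projects onto a factor and iterates, again using Lemma \ref{tuda-suda} and Krull-Schmidt cancellation to reduce to the one-factor case.

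The principal obstacle will be producing these multiplicity-one correspondences explicitly: both the self-pairing responsible for intertwining $(X(2^l;D),p)$ with $(X(2^l;D),p^t)$, and the flag-variety correspondences matching an arbitrary $X(i;D)$ with $X(2^l;D)$, require index computations for the parabolic subgroups stabilizing the relevant flags of $D$-submodules. This is where the hypothesis $\deg D = 2^r$ enters essentially, since it controls which residues of reduced dimensions mod $2^{l+1}$ yield an odd index (hence a multiplicity-one cycle) on the incidence variety.
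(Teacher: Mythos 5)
There is a genuine gap — in fact two, located exactly at the points where the real difficulty of Theorem \ref{cd(gSB)} lies. Note first that this paper does not prove the theorem at all: it is imported from \cite{outmot} (Theorems 3.8 and 4.1), where the proofs occupy a substantial part of that paper; so the burden on your argument is to actually supply those proofs, and it does not. For the first assertion (``$M_l$ is lower'', i.e.\ $2$-incompressibility of $X(2^l;D)$), your plan is to produce a correspondence intertwining $(X,p)$ with $(X,p^t)$ of multiplicity one and then invoke Lemma \ref{tuda-suda}. But the second statement of Lemma \ref{tuda-suda} only applies to summands already known to be \emph{upper}; $(X,p^t)$ is upper precisely when $(X,p)=M_l$ is lower, which is the assertion to be proved, so the application is circular. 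Unwinding it, what you actually need is a rational cycle $\alpha\in\Ch_{\dim X}(X\times X)$ with $\mult(p^t\compose\alpha\compose p)=1$, and this is equivalent to the theorem itself. The appeal to a ``Poincar\'e pairing on the tautological flag'' together with the remark that $2^l$ has ``maximal $2$-divisibility'' is not a construction and comes with no degree computation; no such elementary cycle is known to exist, and the proof in \cite{outmot} (already for $l=0$, the incompressibility of the Severi--Brauer variety of a $2$-primary division algebra) rests on considerably heavier tools than anything available from the lemmas of the present paper.

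For the second assertion the sketch also breaks down. Your induction ``on $\dim Y$'' is not well founded: the complement of the upper indecomposable summand of $M(X(i;D))$ is a motive, not the motive of a lower-dimensional product of generalized Severi--Brauer varieties, so the inductive hypothesis cannot be applied to it; and for a product $\prod_j X(i_j;D)$, knowing the decomposition of each factor says nothing a priori about the indecomposable summands of the tensor product — controlling those is exactly the content of the upper-motive structure theorem of \cite{outmot}, which you would be assuming. Likewise, the multiplicity-one correspondences between $X(i;D)$ and $X(2^{l};D)$ (with $2^l$ the $2$-primary part of $i$) that you want to extract from incidence flag varieties require an actual computation of odd-degree zero-cycles (index reduction on the fibres over the relevant function fields); you assert that the ``$2$-adic combinatorics'' yields multiplicity one but never verify it, and this verification — together with the fact that \emph{only} the motives $M_l$ with $2$-power indices occur — is the substance of Theorem 3.8 of \cite{outmot}. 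What can be salvaged from your outline are the soft ingredients (Krull--Schmidt via Corollary \ref{krull-schmidt}, uniqueness of the upper summand as in Remark \ref{outer unique}, Lemma \ref{p outer}); the two hard steps are taken for granted.
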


For the rest of this section, we fix an orthogonal involution
on the algebra $D$.

\begin{lemma}
\label{stand}
Let $n$ be a positive integer.
Let $h$ be a hyperbolic hermitian form on the right $D$-module $D^{2n}$
and let $Y$ be the variety
$\SB(n\deg D;(D^{2n},h))$ (of the {\em maximal} totally isotropic submodules).
Then the motive $M(Y)$ is isomorphic to a
finite sum of several shifted copies of
the motives $M_0,M_1,\dots,M_r=\Lambda$ including one non-shifted copy of $\Lambda$.
\end{lemma}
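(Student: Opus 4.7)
The plan is to obtain the motivic decomposition of $Y$ from a Bruhat-type stratification coming from the hyperbolic decomposition of $h$, and to identify the resulting pieces via Theorem~\ref{cd(gSB)}.

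First, since $h$ is hyperbolic, I would fix a decomposition $D^{2n}=L\oplus L'$ as an orthogonal sum of two Lagrangians $L,L'\cong D^n$. For each maximal totally isotropic $D$-submodule $M\subset D^{2n}$, set $k=\rdim(M\cap L')$; this integer ranges over $\{0,1,\dots,n\deg D\}$, and stratifying $Y$ by the value of $k$ yields locally closed subvarieties $Y_k\subset Y$. The projection $\pi\colon D^{2n}\to L$ along $L'$ restricts on $Y_k$ to a morphism $M\mapsto \pi(M)$ with target the generalized Grassmannian $\SB(n\deg D-k;\,D^n)$. Given $U\subset L$ of reduced dimension $n\deg D-k$, any Lagrangian $M$ projecting onto $U$ is the graph of a $D$-linear map $U\to L'/U^{\perp}$ satisfying a symmetry condition imposed by $h$, and the set of such maps forms an affine space; hence $Y_k$ is a Zariski-locally trivial affine bundle over $\SB(n\deg D-k;\,D^n)$.

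Second, the standard affine-bundle argument for Chow motives yields an isomorphism
$$
M(Y)\;\simeq\;\bigoplus_{k}M\bigl(\SB(n\deg D-k;\,D^n)\bigr)(s_k),
$$
where $s_k$ is the relative dimension of the affine fibres of $Y_k\to\SB(n\deg D-k;\,D^n)$. At this point I would invoke Theorem~\ref{cd(gSB)}---extended from the varieties $\SB(i;D)$ to the generalized Grassmannians $\SB(j;D^n)$ via an auxiliary flag-variety argument (or, equivalently, by applying the techniques of \cite{outmot} to the algebra $M_n(D)$, which is Brauer-equivalent to $D$)---to conclude that every motive $M\bigl(\SB(j;D^n)\bigr)$ is a finite sum of shifted copies of $M_0,\dots,M_r$. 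Combined with the shifts $s_k$, this produces the desired decomposition of $M(Y)$ into shifted copies of the $M_l$.

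Finally, the presence of a non-shifted copy of $\Lambda=M_r$ follows from the fact that $L$ itself is an $F$-point of $Y$: the resulting morphisms $\pt\to Y$ and $Y\to\pt$ compose to $\id_{\pt}$ and split off a non-shifted Tate summand of $M(Y)$. By the Krull--Schmidt principle (Corollary~\ref{krull-schmidt}), this summand must coincide with one of the indecomposable summands in the decomposition above; the only possibility is $\Lambda=M_r$ in degree~$0$.

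The main obstacle is the middle step---reducing the motives of the generalized Grassmannians $\SB(j;D^n)$ to shifted sums of the $M_l$'s attached to the basic varieties $\SB(2^l;D)$ of right ideals in $D$. Theorem~\ref{cd(gSB)} is literally stated for products of varieties $\SB(i;D)$, not for varieties of submodules of the free module $D^n$, so one must either carry out a flag-variety bookkeeping reducing $\SB(j;D^n)$ to products of the $\SB(i;D)$ or (more directly) invoke the more general form of the results of \cite{outmot} valid for any algebra Brauer-equivalent to $D$. All other steps are routine once the affine-bundle stratification is in place.
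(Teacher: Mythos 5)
Your route is in substance the same as the paper's: a cellular decomposition of $Y$ into pieces governed by submodule varieties of a Lagrangian $L\simeq D^n$, followed by a reduction of those pieces to the upper motives $M_0,\dots,M_r$. The paper does not carry out your first step by hand: it simply cites \cite[Corollary 15.9]{MR1758562} (cf.\ \cite{MR2110630}, \cite{MR2178658}), which states exactly that $M(Y)$ is quasi-isomorphic to the motive of the total variety $\SB(*;D^n)=\coprod_i\SB(i;D^n)$; your Lagrangian stratification is essentially the proof of that statement (and it covers both connected components of $Y$, so nothing is lost there). The step you rightly single out as the main obstacle --- getting from the varieties $\SB(j;D^n)$, which are generalized Severi--Brauer varieties of $M_n(D)$ rather than of $D$, to sums of shifted $M_l$ --- is closed in the paper by a second citation rather than by new work: \cite[Corollary 10.10]{MR1758562} gives $M(\SB(*;D^n))\approx M(\SB(*;D))^{\otimes n}$, which is precisely the ``flag-variety bookkeeping'' you ask for, and after it Theorem \ref{cd(gSB)} applies verbatim to products of generalized Severi--Brauer varieties of $D$. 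Your alternative suggestion (invoking the upper-motive machinery of \cite{outmot} for the Brauer-equivalent algebra $M_n(D)$) would also work, but as written your argument leaves this one step unproved, so you should either quote the tensor-power formula above or actually carry out the reduction. Your treatment of the unshifted Tate summand is correct and slightly different from the paper's: you split off $\Lambda$ from the rational point $L\in Y(F)$ and then use Corollary \ref{krull-schmidt} to identify it with $M_r$ unshifted, whereas the paper observes that the summand coming from $\SB(0;D^n)=\Spec F$ occurs with shift zero; your variant is a perfectly good (and arguably cleaner) way to pin down the shift, since quasi-isomorphism alone forgets shifts.
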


\begin{proof}
By
\cite[{\cyr Sledstvie} 15.9]{MR1758562}
(cf. \cite{MR2110630} or \cite{MR2178658}),
the motive of the variety $Y$ is quasi-isomorphic to the motive of the ``total''
variety
$$
\SB(*;D^n)=\coprod_{i\in\Z}\SB(i;D^n)=\coprod_{i=0}^{4n}\SB(i;D^n)
$$
of $D$-submodules in $D^n$.
Furthermore, $M(\SB(*;D^n))\approx M(\SB(*;D))^{\otimes n}$
by \cite[{\cyr Sledstvie} 10.10]{MR1758562} (cf. \cite{MR2110630} or \cite{MR2178658}).
We finish by Theorem \ref{cd(gSB)}.
The non-shifted copy of $\Lambda$ is obtained as
$\Lambda=M(\SB(0;D^n))=M(\SB(0;D))^{\otimes n}$.
\end{proof}

As before,
we write $\Ch(-)$ for the Chow group $\CH(-;\F_2)$ with coefficients in $\F_2$.
We recall that a smooth complete variety is called {\em anisotropic}, if the degree of its
any closed point is even (the empty variety is anisotropic).
The following statement is a particular case of \cite[\Small Lemma 2.21]{outmot}.

\begin{lemma}
\label{anisotropic}
Let $Z$ be an anisotropic $F$-variety with a projector $p\in\Ch_{\dim Z}(Z\times Z)$ such that
the motive $(Z,p)_L\in\CM(L,\F_2)$ for a field extension $L/F$
is isomorphic to a finite sum of Tate motives.
Then the number of the Tate summands is even.
In particular, the motive in $\CM(F,\F_2)$
of any anisotropic $F$-variety does not contain a Tate summand.
\end{lemma}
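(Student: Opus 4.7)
The plan is to attach to any endomorphism $p \in \End(M(Z)) = \Ch_{\dim Z}(Z \times Z)$ a ``trace'' invariant
$\tau(p) := \deg_{Z/F}(\Delta_Z^*(p)) \in \F_2$,
where $\Delta_Z : Z \to Z \times Z$ is the diagonal and $\Delta_Z^*(p) \in \Ch_0(Z)$. Since both $\Delta_Z^*$ and the degree homomorphism commute with the change-of-field map $\res_{L/F}$, this invariant is preserved under extension of scalars, i.e.\ $\tau(p) = \tau(p_L)$. I would then compute $\tau(p)$ in two ways and equate the results.

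Over $F$: the anisotropy of $Z$ means that every closed point of $Z$ has even degree over $F$, so with $\F_2$ coefficients the degree map $\Ch_0(Z) \to \F_2$ is identically zero. In particular $\tau(p) = 0$.

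Over $L$: from the assumed decomposition $(Z,p)_L \simeq \bigoplus_{j=1}^n \F_2(n_j)$ I would extract morphisms $\iota_j : \F_2(n_j) \to M(Z)_L$ and $\pi_j : M(Z)_L \to \F_2(n_j)$ satisfying $\pi_k \circ \iota_j = \delta_{jk} \cdot \id$ and $\sum_{j=1}^n \iota_j \circ \pi_j = p_L$. Under the standard identifications $\iota_j \leftrightarrow \tilde\iota_j \in \Ch_{n_j}(Z_L)$ and $\pi_j \leftrightarrow \tilde\pi_j \in \Ch^{n_j}(Z_L)$, the first relation becomes $\deg_{Z_L/L}(\tilde\pi_j \cdot \tilde\iota_j) = 1$ in $\F_2$, and the second expresses $p_L$ as the sum of the external products $\tilde\pi_j \times \tilde\iota_j$. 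Applying $\Delta_{Z_L}^*$ converts external products into intersection products, giving $\tau(p_L) = \sum_{j=1}^n \deg(\tilde\pi_j \cdot \tilde\iota_j) = n \pmod 2$.

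Equating the two computations yields $n \equiv 0 \pmod 2$, which is the first assertion. The ``in particular'' clause follows with $L = F$: a Tate summand $\F_2(m)$ of $M(Z)$ would be cut out by a projector $p$ with $(Z,p) \simeq \F_2(m)$, and the first part of the lemma would then force $1$ to be even, a contradiction. The only routine point to verify carefully is the identification of the endomorphism $\iota_j \circ \pi_j$ with the external-product correspondence $\tilde\pi_j \times \tilde\iota_j$; this is a standard consequence of the formulas for the Hom groups and the composition law in $\CM(F,\F_2)$, so I do not expect any serious obstacle.
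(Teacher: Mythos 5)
Your proof is correct and is essentially the paper's own argument: the paper likewise writes $p_L=\sum_i a_i\times b_i$ with $\deg(a_ib_j)=\delta_{ij}$ coming from the mutually inverse isomorphisms with the sum of Tate motives, pulls $p$ back along the diagonal, and uses anisotropy of $Z$ to conclude that the resulting degree $n\pmod 2$ must vanish. Your ``trace'' formulation and the explicit treatment of the ``in particular'' clause with $L=F$ are just a more detailed write-up of the same computation.
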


\begin{proof}
Mutually inverse isomorphisms between $(Z,p)_L$ and a sum of, say, $n$ Tate summands,
are given by two sequences of homogeneous elements
$a_1,\dots,a_n$ and $b_1,\dots,b_n$ in $\Ch(Z_L)$ with
$p_L=a_1\times b_1+\dots+a_n\times b_n$ and such that for any $i,j=1,\dots,n$
the degree $\deg(a_ib_j)$ is $0$ for $i\ne j$ and $1\in\F_2$ for $i=j$.
The pull-back of $p$ via the diagonal morphism of $Z$ is therefore a $0$-cycle class
on $Z$ of degree $n$ (modulo $2$).
\end{proof}

\begin{lemma}
\label{Y'}
Let $n$ be a positive integer.
Let $h'$ be a hermitian form on the right $D$-module $D^n$
such that $h'_L$ is anisotropic for any finite odd degree field extension
$L/F$.
Let $h$ be the hermitian form on the right $D$-module $D^{n+2}$ which is the
orthogonal sum of $h'$ and a hyperbolic $D$-plane.
Let $Y'$ be the variety of totally isotropic submodules of $D^{n+2}$
of reduced dimension $2^r$ ($=\ind D$).
Then the complete motivic decomposition of $M(Y')\in\CM(F,\F_2)$
(cf. Corollary \ref{krull-schmidt})
contains one summand $\F_2$,
one summand $\F_2(\dim Y')$, and does not contain any other Tate motive.
\end{lemma}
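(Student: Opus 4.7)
The plan is as follows. First, I would use the isotropic $D$-line $De\subset V$ provided by the hyperbolic plane inside $h$ to split off both Tate summands. Since $n\ge 1$, the variety $Y'$ is smooth, projective, and geometrically irreducible; the classes $p=[Y']\times[De]$ and $q=[De]\times[Y']$ in $\Ch_{\dim Y'}(Y'\times Y')$ are orthogonal idempotents realising $(Y',p)\simeq\F_2$ and $(Y',q)\simeq\F_2(\dim Y')$ as direct summands of $M(Y')$ (the degree of the rational point $[De]$ being $1$). Thus $M(Y')\simeq\F_2\oplus\F_2(\dim Y')\oplus N$ for some motive $N$, and the lemma reduces to showing that $N$ contains no Tate summand.

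To control $N$ I would re-express $M(Y')$ as a direct sum of motives of anisotropic varieties plus the two Tate summands already found, and then invoke Lemma \ref{anisotropic} --- which forbids Tate summands in the motive of an anisotropic variety --- together with the Krull--Schmidt principle (Corollary \ref{krull-schmidt}). To produce such an expression I would apply to the orthogonal splitting $h=h'\perp\HH$ the motivic decomposition theorem for isotropic projective homogeneous varieties of \cite{MR1758562}, the same tool that powers Lemma \ref{stand}. I expect this to present $M(Y')$ as a direct sum of shifted motives of products of the form
\[
\SB(i_1\subset\cdots\subset i_s;(D^n,h'))\times\SB(j_1\subset\cdots\subset j_t;D),
\]
with exactly two ``extremal'' summands realising the Tate pieces $\F_2$ and $\F_2(\dim Y')$.

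The remaining summands in this decomposition should then be motives of anisotropic varieties. On the one hand, any nonempty isotropic flag variety $\SB(i_1\subset\cdots\subset i_s;(D^n,h'))$ with $i_1>0$ is anisotropic: a closed point of odd degree would give, over its finite odd-degree residue field $L/F$, an isotropic $D$-submodule of $(D^n,h')$ and hence an isotropic vector of $h'_L$, contradicting the hypothesis. On the other hand, the generalized Severi--Brauer variety $\SB(2^l;D)$ with $0\le l<r$ has index $2^{r-l}\ge 2$ (as $D$ is division of degree $2^r$), so is also anisotropic; Theorem \ref{cd(gSB)} moreover identifies the upper/lower building blocks $M_0,\dots,M_{r-1}$ on the $D$-side, each free of Tate summands. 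A product of varieties one of which is anisotropic is again anisotropic, so Lemma \ref{anisotropic} prohibits Tate summands in each non-extremal piece; Krull--Schmidt then assembles this into the claimed statement about the complete motivic decomposition of $M(Y')$.

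The main obstacle will be extracting from \cite{MR1758562} a motivic decomposition of $M(Y')$ in precisely the form sketched above: one in which the two extremal Tate pieces $\F_2$ and $\F_2(\dim Y')$ are explicitly isolated, and every remaining summand has at least one factor that is anisotropic under the hypothesis on $h'$. Once this decomposition is in hand, the anisotropy verification and the appeal to Lemma \ref{anisotropic} and Krull--Schmidt are routine.
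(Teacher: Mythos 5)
Your proposal is correct and takes essentially the same route as the paper: the paper invokes the decomposition (Sledstvie 15.9 of \cite{MR1758562}) of $M(Y')$ into motives of the products $\SB(i\subset j;D)\times\SB(j-i;(D^n,h'))$, reads off the two extremal choices $i=j=0$ and $i=j=2^r$ as $\F_2$ and $\F_2(\dim Y')$, and excludes all other Tate summands because every remaining product has an anisotropic factor (the hermitian one by the hypothesis on $h'$, the Severi--Brauer one because $D$ stays division over odd degree extensions), concluding by Lemma \ref{anisotropic} and Krull--Schmidt. Your preliminary splitting-off of the two Tate summands via the rational point is harmless but redundant, and the decomposition you flag as the ``main obstacle'' is exactly the cited result, so no genuine gap remains.
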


\begin{proof}
According to \cite[{\cyr Sledstvie} 15.9]{MR1758562},
$M(Y')$ is quasi-isomorphic to the sum of the motives of
the products
$$
\SB(i\subset j;D)\times \SB(j-i;(D^n,h'))
$$
where $i,j$ run
over all integers (the product is non-empty iff $0\leq i\leq j\leq 2^r$).
The choices $i=j=0$ and $i=j=2^r$ give the
summands $\F_2$ and $\F_2(\dim Y')$.
The variety obtained by any other choice of $i,j$ is anisotropic
(the variety with $i=0,j=2^r$ is anisotropic by the assumption involving the odd degree
field extensions),
and we are done by Lemma \ref{anisotropic}.
\end{proof}

\section
{Proof of Main theorem}
\label
{Proof of Theorem}

We fix a central simple algebra $A$ of index $>1$ with a non-hyperbolic orthogonal
involution $\sigma$.
Since the involution is an isomorphism of $A$ with its dual,
the exponent of $A$ is $2$; therefore, the index of $A$ is a
power of $2$, say, $\ind A=2^r$ for a positive integer $r$.
We assume that $\sigma$ becomes hyperbolic over the function field of the
Severi-Brauer variety of $A$ and we are looking for a contradiction.


According to \cite[\Small Theorem 3.3]{isotropy}, $\coind A=2n$ for some integer $n\geq1$.
We assume that Main theorem (Theorem \ref{main}) is already proven for all algebras (over
all fields) of index $<2^r$ as well as for all algebras of index $2^r$ and coindex $<2n$.

Let $D$ be a central division algebra Brauer-equivalent to $A$.
Let $X_0$ be the Severi-Brauer variety of $D$.
Let us fix an orthogonal involution on $D$ and an isomorphism of $F$-algebras
$A\simeq\End_D(D^{2n})$.
Let $h$ be
a hermitian form on the right $D$-module $D^{2n}$ such that
$\sigma$ is adjoint to $h$.
Then $h_{F(X_0)}$ is hyperbolic.
Since the anisotropic kernel of $h$ also becomes hyperbolic over $F(X_0)$,
our induction hypothesis ensures that $h$ is anisotropic.
Moreover, $h_L$ is hyperbolic for any field extension $L/F$ such that $h_L$ is
isotropic.
It follows by \cite[\Small Proposition 1.2]{MR1055648}
that $h_L$ is anisotropic for any finite odd degree field
extension $L/F$.

Let $Y$ be the variety of totally isotropic submodules in $D^{2n}$ of reduced
dimension $n\deg D$.
(The variety $Y$ is a twisted form of the variety of maximal totally
isotropic subspaces of a quadratic form studied in \cite[\Small Chapter XVI]{EKM}.)
It is isomorphic to the variety of totally isotropic right ideals in $A$ of
reduced dimension $(\deg A)/2$ (=$n2^r$).
Since $\sigma$ is hyperbolic over $F(X_0)$ and the field $F$ is algebraically closed
in $F(X_0)$ (because the variety $X_0$ is geometrically integral), the
discriminant of $\sigma$ is trivial.
Therefore the variety $Y$ has two connected components
$Y_+$ and $Y_-$ corresponding to the components $C_+$ and $C_-$
(cf. \cite[\Small Theorem 8.10]{EKM})
of the Clifford algebra $C(A,\sigma)$.

Since $\sigma_{F(X_0)}$ is hyperbolic, $Y(F(X_0))\ne\emptyset$.
Since the varieties $Y_+$ and $Y_-$ become isomorphic over $F(X_0)$,
each of them has an $F(X_0)$-point.

The central simple algebras $C_+$ and $C_-$ are related with $A$ by the formula
\cite[\Small (9.14)]{MR1632779}:
$$
[C_+]+[C_-]=[A]\in\Br(F).
$$
Since $[C_+]_{F(X_0)}=[C_-]_{F(X_0)}=0\in\Br(F(X_0))$, we have
$[C_+],[C_-]\in\{0,[A]\}$ and it follows that $[C_+]=[A]$, $[C_-]=0$ up
to exchange of the indices $+,-$.

By the index reduction formula for the varieties $Y_+$ and $Y_-$ of
\cite[\Small page 594]{MR1415325}, we have (up
to exchange of the indices $+,-$):
$\ind D_{F(Y_+)}=\ind D$, $\ind D_{F(Y_-)}=1$.
We replace $Y$ by the component of $Y$ whose function field does not reduce the
index of $D$.
Note that the variety $Y$ is projective homogeneous.

The coefficient ring $\Lambda$ is $\F_2$ in this section.
We use the $F$-motives $M_0,\dots,M_r$ introduced in Theorem
\ref{cd(gSB)}.
Note that for any field extension $E/F$ such that $D_E$ is still a division algebra,
we also have the $E$-motives $M_0,\dots,M_r$.

\begin{lemma}
\label{R2}
The motive of $Y$ decomposes as $R_1\oplus R_2$, where $R_1$ is
quasi-isomorphic to a finite sum
of several copies of the motives $M_0,\dots,M_{r-1}$, and where
$(R_2)_{F(Y)}$ is isomorphic to a finite sum of Tate motives including
one exemplar of $\F_2$.
\end{lemma}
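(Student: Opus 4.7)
The plan is to describe $M(Y)$ first over $F(Y)$ (where the hermitian form becomes hyperbolic), and then to transfer the resulting splitting back to $F$ via the Krull--Schmidt principle.

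First, since $Y(F(Y))\ne\emptyset$, the form $h_{F(Y)}$ is isotropic and hence, by the dichotomy ``$h_L$ isotropic $\Rightarrow$ $h_L$ hyperbolic'' established earlier in the proof of the main theorem, hyperbolic. By our choice of component, $D_{F(Y)}$ remains a division algebra of degree $2^r$. Lemma \ref{stand}, applied over $F(Y)$ to the hyperbolic hermitian space $(D_{F(Y)}^{2n},h_{F(Y)})$, together with the standard decomposition of the maximal totally isotropic Grassmannian into its two components, shows that $M(Y)_{F(Y)}$ is quasi-isomorphic to a finite sum of shifted copies of $(M_0)_{F(Y)},\ldots,(M_r)_{F(Y)}$, and contains one non-shifted copy of $\F_2=M_r$ supplied by the rational point of $Y_{F(Y)}$.

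Next I invoke Corollary \ref{krull-schmidt} over $F$ to write $M(Y)=\bigoplus_i N_i$ with $N_i$ indecomposable. Base-changing to $F(Y)$ and using uniqueness of Krull--Schmidt there, each $(N_i)_{F(Y)}$ is a sum of shifts of $M_0,\ldots,M_r$. I then define $R_2$ to be the direct sum of those $N_i$ whose base change to $F(Y)$ involves only shifts of $M_r=\F_2$ (i.e.\ is purely Tate), and $R_1$ the sum of the remaining $N_i$. By construction, $(R_2)_{F(Y)}$ is a sum of Tate motives.

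To finish, two points must be established: (a) $R_1$ is quasi-isomorphic \emph{over $F$} (not merely over $F(Y)$) to a sum of shifted copies of $M_0,\ldots,M_{r-1}$; and (b) the non-shifted $\F_2$ in the $F(Y)$-decomposition of $M(Y)$ contributes to $R_2$, so that $(R_2)_{F(Y)}$ really contains a non-shifted $\F_2$. For (b), the relevant $\F_2$ is the unique upper indecomposable summand of $M(Y)_{F(Y)}$ (Remark \ref{outer unique}), and by the same remark applied over $F$ it corresponds to the unique upper indecomposable $F$-summand of $M(Y)$; I expect this summand to have purely Tate base change to $F(Y)$, which one can verify by comparing with its base change to $F(X_0)$ (where $D$ splits, $h$ becomes hyperbolic, and $M(Y)$ is Tate). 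For (a), I would apply Proposition \ref{prop} with $X=\SB(2^l;D)$ and $M=M(Y)$ (suitably shifted) to descend each shifted $M_l$-summand of $M(Y)_{F(Y)}$ with $l<r$ to an $F$-summand of $M(Y)$; Krull--Schmidt uniqueness then identifies these $F$-summands with the $N_i$ comprising $R_1$. The main obstacle is the verification of the purely-transcendental hypothesis of Proposition \ref{prop}: for $l=0$ it follows from the rationality of $Y_{F(X_0)}$, while for $0<l<r$ this is the delicate step and requires a careful use of the hyperbolicity of $h$ over both $F(X_0)$ and $F(Y)$ together with the geometry of the generalized Severi--Brauer varieties.
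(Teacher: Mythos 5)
Your strategy is the same as the paper's (decompose $M(Y)_{F(Y)}$ via Lemma \ref{stand}, then descend the summands $M_l$ with $l<r$ to $F$ using Proposition \ref{prop} and Krull--Schmidt), but the step you explicitly leave open --- verifying the purely-transcendental hypothesis (1) of Proposition \ref{prop} for $X_l=X(2^l;D)$ with $0<l<r$ --- is precisely where the paper's short key argument lies, so this is a genuine gap. The point you are missing is that Lemma \ref{R2} sits inside the proof of the Main theorem in \S\ref{Proof of Theorem}, where the theorem is assumed already known for all algebras of index $<2^r$. Since $X_l$ acquires a rational point over its own function field, $\ind D_{F(X_l)}$ divides $2^l<2^r$ for every $l\le r-1$; hence the induction hypothesis applies to $(A,\sigma)_{F(X_l)}$, whose involution becomes hyperbolic over the function field of the corresponding Severi--Brauer variety (because $h_{F(X_0)}$ is hyperbolic), and therefore $h_{F(X_l)}$ is hyperbolic. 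Consequently $Y_{F(X_l)}$ is rational and $E(X_l)=F(X_l)(Y)$ is purely transcendental over $F(X_l)$ for $E=F(Y)$ --- for \emph{all} $l\le r-1$, not only $l=0$, and with no extra ``geometry of generalized Severi--Brauer varieties''. Since you never invoke this index-induction hypothesis, your point (a), and with it the decomposition, is not established. (A smaller issue: the descent has to be performed iteratively --- split off one copy of $M_l$, pass to the complement, and use Corollary \ref{krull-schmidt} to see that its $F(Y)$-decomposition is the previous one with that single copy erased; applying Proposition \ref{prop} once per shift to the fixed motive $M(Y)$ cannot account for multiplicities.)

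Your argument for (b) also does not work as sketched: knowing that the upper indecomposable $F$-summand of $M(Y)$ becomes a sum of Tate motives over $F(X_0)$ gives no information about its base change to $F(Y)$, and a priori that base change could contain a non-shifted copy of some $M_l$ with $l<r$ (all the $M_l$ are upper) rather than being purely Tate. In the paper (b) needs no separate argument: the erasure procedure removes only copies of $M_l$ with $l<r$, so the non-shifted $\F_2$ supplied by Lemma \ref{stand} automatically survives in $(R_2)_{F(Y)}$. In your framing one can recover (b) from a correctly proved (a) by observing that for $l<r$ the motive $(M_l)_{F(Y)}$ contains no Tate summand at all (Lemma \ref{anisotropic}, the varieties $X(2^l;D_{F(Y)})$ being anisotropic since $D_{F(Y)}$ is division), so Krull--Schmidt over $F(Y)$ forces the non-shifted $\F_2$ into $(R_2)_{F(Y)}$; either way it is a consequence of (a), not of a comparison with $F(X_0)$.
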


\begin{proof}
According to Lemma \ref{stand} and since $D_{F(Y)}$ is a division algebra,
the motive $M(Y)_{F(Y)}$ is isomorphic to a
sum of several shifted copies of the $F(Y)$-motives $M_0,\dots,M_r=\F_2$
including
a copy of $\F_2$.
If for some $l=0,\dots,r-1$ there is at least one copy of $M_l$ (with a shift $j\in\Z$) in the
decomposition,
let us apply Proposition \ref{prop} taking as $X$ the variety $X_l=X(2^l,D)$,
taking as $M$ the motive $M(Y)(-j)$, and taking as $E$ the function field
$F(Y)$.

Since, as mentioned already, $D_E$ is a division algebra, condition (2) of Proposition \ref{prop} is fulfilled.
Since $\ind D_{F(X)}<2^r$, the hermitian form $h_{F(X)}$ is hyperbolic by
the induction hypothesis;
therefore the variety $Y_{F(X)}$ is rational and condition (1) of Proposition \ref{prop} is
fulfilled as well.

It follows that the $F$-motive $M_l$ is a summand of $M(Y)(-j)$.
Let now $M$ be the complement summand of $M(Y)(-j)$.
By Corollary \ref{krull-schmidt}, the complete decomposition of $M_{F(Y)}$ is
the complete decomposition of $M(Y)(-j)_{F(Y)}$ with one copy of $M_l$ erased.
If $M_{F(Y)}$ contains
one more copy of a shift of $M_l$ (for some $l=0,\dots,r-1$),
we once again apply Proposition
\ref{prop} to the variety $X_l$ and an appropriate shift of
$M$.
Doing this until we can, we get the desired decomposition in the end.
\end{proof}

Now let us consider a minimal right $D$-submodule $V\subset D^{2n}$ such that
$V$ becomes isotropic over a finite odd degree field extension of $F(Y)$.
We set $v=\dim_DV$.
Clearly, $2\leq v\leq n+1$. \footnote{One probably always has $v=n+1$ here,
but we do not need to know the precise value of $v$.}
For $n>1$, let $Y'$ be the variety of totally isotropic submodules in $V$ of reduced
dimension $2^r$  (that is, of ``$D$-dimension'' $1$).
For $n=1$ we set $Y'=Y$.
Note that the variety $Y'$ is projective homogeneous.

The variety $Y'$ is irreducible and has an even positive dimension.
Moreover, the variety $Y'$ is anisotropic (because the hermitian form $h$ is anisotropic
and remains anisotropic over any finite odd degree field extension of the base
field).
Surprisingly, we can however prove the following

\begin{lemma}
\label{there is}
There is a Rost projector (Definition \ref{def rost corr}) on $Y'$.
\end{lemma}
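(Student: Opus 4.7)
The plan is to realize the desired Rost projector as the projector defining an indecomposable summand $R$ of $M(Y')$ that, after passing to a suitable field extension $L/F$, becomes isomorphic to $\F_2\oplus\F_2(\dim Y')$. Since $Y'$ is projective homogeneous, Corollary~\ref{vid} will then conclude that the projector in question is a Rost projector.

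First I would take $L$ to be a finite odd-degree extension of $F(Y)$ over which $V$ becomes isotropic (such an $L$ exists by the defining property of $V$). Over $L$, the form $h|_V$ splits as a hyperbolic $D$-plane orthogonal to an anisotropic hermitian form $h''$ on $D^{v-2}$. The minimality of $V$, combined with a Springer-type transfer argument for hermitian forms under odd-degree base change, guarantees that $h''$ remains anisotropic under any further finite odd-degree extension of $L$. Thus Lemma~\ref{Y'} applies to $(Y')_L$ and yields that the complete motivic decomposition of $M(Y')_L$ contains exactly one $\F_2$, exactly one $\F_2(\dim Y')$, and no other Tate summand; the remaining summands come from motives of anisotropic products $\SB(i\subset j;D)\times\SB(j-i;(D^{v-2},h''))$ with $(i,j)\notin\{(0,0),(2^r,2^r)\}$, which by Theorem~\ref{cd(gSB)} are (quasi-isomorphic to) sums of shifted copies of $M_0,\ldots,M_{r-1}$ tensored with factors coming from $h''$.

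Next, in direct analogy with the proof of Lemma~\ref{R2}, I would iteratively apply Proposition~\ref{prop} with $X=X_l=X(2^l;D)$ and $M=M(Y')(-j)$ for appropriate shifts $j$ and each $l=0,1,\dots,r-1$ that occurs in the $L$-decomposition, peeling off every shifted copy of $M_l$ that appears. The purely-transcendental condition $L(X_l)/F(X_l)$ is furnished by the induction hypothesis of \S\ref{Proof of Theorem}: since $\ind D_{F(X_l)}<2^r$, the main theorem holds over $F(X_l)$, so $h_{F(X_l)}$ is hyperbolic, making $Y'_{F(X_l)}$ rational. After stripping off all such non-Tate summands, what remains is a summand $R$ of $M(Y')$ whose $L$-realization is purely Tate; combining with Step~1, we obtain $R_L\simeq\F_2\oplus\F_2(\dim Y')$. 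A transposition/duality argument (using that $Y'$ is projective homogeneous, so motivic duality interchanges upper and lower summands, and the uniqueness of Remark~\ref{outer unique}) ensures that $R$ genuinely contains both Tate factors over $L$ rather than only one. Corollary~\ref{vid} then identifies the projector cutting out $R$ as a Rost projector.

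The main obstacle I anticipate is the bookkeeping in Step~2. Lemma~\ref{Y'} describes the summands of $M(Y')_L$ as products involving \emph{both} pieces from $\SB(i\subset j;D)$ (handled by Theorem~\ref{cd(gSB)}) and pieces from $\SB(j-i;(D^{v-2},h''))$ (a variety attached to the anisotropic hermitian form $h''$). One must verify that every non-Tate indecomposable summand is absorbed by the Proposition~\ref{prop} process — i.e., that each surviving tensor component is indeed a shift of some $M_l$ with $l\leq r-1$ (after merging via quasi-isomorphism with the $h''$-factor), so that the hypothesis of Proposition~\ref{prop} can be verified in each case. Once this combinatorial reduction is in place, the construction of the Rost projector follows exactly the template of Lemma~\ref{R2}, and the rest of the proof of Main theorem proceeds by combining this Rost projector with Theorems~\ref{markus1} and~\ref{markus2} to reach the required contradiction.
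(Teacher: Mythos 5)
Your plan has two genuine gaps, and they are exactly the points the paper's argument is designed to circumvent. First, your Step 2 applies Proposition \ref{prop} with $E=L$, a finite odd-degree extension of $F(Y)$ over which $V$ becomes isotropic. Condition (1) of that proposition demands that $L(X_l)/F(X_l)$ be purely transcendental; rationality of $Y'_{F(X_l)}$ (or of $Y_{F(X_l)}$) only gives that $F(Y')(X_l)/F(X_l)$, resp.\ $F(Y)(X_l)/F(X_l)$, is purely transcendental, and since $L/F(Y)$ is in general a nontrivial finite extension, $L(X_l)/F(X_l)$ is not purely transcendental. So the hypothesis you claim is ``furnished by the induction hypothesis'' actually fails, and the peeling procedure cannot even start over $E=L$. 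Second, the obstacle you flag as bookkeeping is a real obstruction: by the proof of Lemma \ref{Y'}, the decomposition of $M(Y')_L$ involves the products $\SB(i\subset j;D)\times\SB(j-i;(D^{v-2},h''))$, and for $i<j$ the factors attached to the anisotropic hermitian form $h''$ are not generalized Severi--Brauer varieties of $D$; Theorem \ref{cd(gSB)} says nothing about them, their indecomposable summands are in general not shifts of $M_0,\dots,M_{r-1}$, and there is no candidate $F$-variety $X$ with which Proposition \ref{prop} could descend them. Hence you cannot strip off ``every non-Tate summand,'' and the remaining summand $R$ would not have purely Tate $L$-realization.

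The paper takes a different route precisely to avoid controlling $M(Y')$ itself: it first proves Lemma \ref{R2} for the variety $Y$, where Proposition \ref{prop} is applied with $E=F(Y)$ (so condition (1) does hold, via hyperbolicity of $h_{F(X_l)}$ and rationality of $Y_{F(X_l)}$), obtaining $M(Y)=R_1\oplus R_2$ with $(R_2)_{F(Y)}$ a sum of Tate motives. It then transfers an upper summand from $Y$ to $Y'$ by Lemma \ref{tuda-suda}, using the multiplicity-$1$ (mod $2$) correspondence $\alpha\in\Ch_{\dim Y}(Y\times Y')$ coming from the odd-degree isotropy of $V$ over $F(Y)$ and the multiplicity-$1$ correspondence $\beta\in\Ch_{\dim Y'}(Y'\times Y)$ coming from hyperbolicity of $h_{F(Y')}$; this is how the odd-degree extension enters, compatibly with $\F_2$-coefficients, instead of through Proposition \ref{prop}. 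The resulting upper projector $\rho$ on $Y'$ satisfies: $(Y',\rho)_{\tilde F}$ is a summand of $(R_2)_{\tilde F}$, hence a sum of Tate motives; it contains $\F_2$ since it is upper; Lemma \ref{Y'} says the only Tate summands available in the complete decomposition of $Y'_{\tilde F}$ are one $\F_2$ and one $\F_2(\dim Y')$; and the evenness statement of Lemma \ref{anisotropic} (not a duality argument) forces both to occur, so $(Y',\rho)_{\tilde F}\simeq\F_2\oplus\F_2(\dim Y')$ and Corollary \ref{vid} applies. If you want to salvage your approach, you must either replace your use of Proposition \ref{prop} over $L$ by such a correspondence/transfer argument, or prove a strengthening of Proposition \ref{prop} allowing odd-degree (rather than purely transcendental) extensions together with a classification of all indecomposable summands of $M(Y')_L$ --- neither of which is available in the paper.
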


\begin{proof}
By the construction of $Y'$, there exists a correspondence
of odd multiplicity (that is, of multiplicity $1\in\F_2$)
$\alpha\in\Ch_{\dim Y}(Y\times Y')$.
On the other hand, since $h_{F(Y')}$ is isotropic, $h_{F(Y')}$ is
hyperbolic and therefore there exist a rational map $Y'\RatM Y$ and a multiplicity
$1$ correspondence $\beta\in\Ch_{\dim Y'}(Y'\times Y)$ (e.g., the class of the closure of the graph of the rational map).
Since the summand $R_2$ of $M(Y)$ given by Lemma \ref{R2} is upper
(cf. Definition \ref{def-outer} and Lemma \ref{tate outer}),
by Lemma \ref{tuda-suda}
there is an upper summand of $M(Y')$ isomorphic to a summand of $R_2$.
%

Let $\rho\in\Ch_{\dim Y'}(Y'\times Y')$ be the projector giving this summand.
We claim that $\rho$ is a Rost projector.
We prove the claim by showing
that the motive $(Y',\rho)_{\tilde{F}}$ is isomorphic
to $\F_2\oplus\F_2(\dim Y')$, cf. Corollary \ref{vid},
where $\tilde{F}/F(Y)$ is a finite odd degree field extension such that
$V$ becomes isotropic over $\tilde{F}$.

Since $(R_2)_{F(Y)}$ is a finite sum of
Tate motives, the motive $(Y',\rho)_{\tilde{F}}$
is also a finite sum of Tate motives.
Since $(Y',\rho)_{\tilde{F}}$ is upper, the Tate motive $\F_2$ is included
(Lemma \ref{tate outer}).
Now, by the minimal choice of $V$, the hermitian form $(h|_V)_{\tilde{F}}$
satisfies the condition on $h$ in Lemma \ref{Y'}:
$(h|_V)_{\tilde{F}}$ is an orthogonal sum of a hyperbolic $D_{\tilde{F}}$-plane
and a hermitian form $h'$ such that $h'_L$ is anisotropic for any finite odd
degree field extension $L/\tilde{F}$ of the base field $\tilde{F}$ (otherwise any $D$-hyperplane $V'\subset V$
would become isotropic over some odd degree extension of $F(Y)$).
Therefore the complete motivic decomposition of $Y'_{\tilde{F}}$
has one copy of $\F_2$, one copy of $\F_2(\dim Y')$, and no other Tate summands.
By Corollary \ref{krull-schmidt} and anisotropy of the variety $Y'$ (see Lemma \ref{anisotropic}),
it follows that
\begin{equation*}
(Y',\rho)_{\tilde{F}}\simeq\F_2\oplus\F_2(\dim Y').
\qedhere
\end{equation*}
\end{proof}


Lemma \ref{there is} contradicts to the general results of \S\ref{Rost correspondences} (namely, to Theorem \ref{markus1} and Lemma \ref{markus2})
thus proving Main theorem (Theorem \ref{main}).
We can avoid the use of Lemma \ref{markus2} by
showing that $\deg c_{\dim Y'}(-T_{Y'})$ is divisible by $2^{2^r}$ for our variety $Y'$.
Indeed, let $K$ be the field $F(t_1,\dots,t_{v2^r})$ of rational functions over $F$
in $v2^r$ variables.
Let us consider the (generic) diagonal quadratic form $\<t_1,\dots,t_{v2^r}\>$ on the $K$-vector space
$K^{v2^r}$.
Let $Y''$ be the variety of $2^r$-dimensional totally isotropic subspaces
in $K^{v2^r}$.
The degree of any closed point on $Y''$ is divisible by $2^{2^r}$.
In particular, the integer $\deg c_{\dim Y''}(-T_{Y''})$ is divisible by
$2^{2^r}$.
Since over an algebraic closure $\bar{K}$ of $K$ the varieties $Y'$ and
$Y''$ become isomorphic,  we have
$$
\deg c_{\dim Y'}(-T_{Y'})=\deg c_{\dim Y''}(-T_{Y''}).
$$


\def\cprime{$'$}

\end{document}